\documentclass[11pt]{amsart}
\usepackage{hyperref}
\usepackage{amsfonts,mathrsfs,bbm,latexsym,rawfonts,amsmath,amssymb,amsthm,epsfig}
\usepackage{fullpage, setspace, color}

\newtheorem{theorem}{Theorem}[section]
\newtheorem{lemma}[theorem]{Lemma}

\newtheorem{thm}{Theorem}[section]

\newtheorem{rem}[thm]{Remark}
\newtheorem{lem}[thm]{Lemma}

\newtheorem{defn}{Definition}[section]

\numberwithin{equation}{section}

\newcommand{\al}{\alpha}

\newcommand{\ld}{\lambda}

\newcommand{\de}{\delta}
\newcommand{\De}{\Delta}
\newcommand{\ep}{\varepsilon}

\newcommand{\Om}{\Omega}
\newcommand{\ga}{\gamma}
\newcommand{\Ga}{\Gamma}

\renewcommand{\th}{\theta}



\newcommand{\J}{\mathfrak{J}}

\newcommand{\U}{\mathbb{S}}



\newcommand{\Real}{\mathbb{R}}

\newcommand{\norm}[1]{\Vert#1\Vert}

\def\<{\left\langle} \def\>{\right\rangle}
\def\({\left(} \def\){\right)}
\newcommand{\n}{\nabla}
\newcommand{\p}{\partial}



\begin{document}
	
\title{Global weak solutions for Landau-Lifshitz flows and heat flows associated to micromagnetic energy functional}
\thanks{*Corresponding Author}
\author{Bo Chen}
\address{University of Chinese Academy of Sciences, Chinese Academy of Sciences, Beijing,100190, P.R.China}
\email{chenbo@amss.ac.cn}

\author{Youde Wang*}
\address{1. College of Mathematics and Information Sciences, Guangzhou University;
2. Hua Loo-Keng Key Laboratory of Mathematics, Institute of Mathematics, AMSS, and School of
Mathematical Sciences, UCAS, Beijing 100190, China.}
\email{wyd@math.ac.cn}
\date{\today}
\begin{abstract}
We follow the idea of Wang \cite{W} to show the existence of global weak solutions to the Cauchy problems of Landau-Lifshtiz type equations and related heat flows from a $n$-dimensional Euclidean domain $\Om$ or a $n$-dimensional closed Riemannian manifold $M$ into a 2-dimensional unit sphere $\U^{2}$. Our conclusions extend a series of related results obtained in the previous literature.
\end{abstract}
\maketitle
\section{Introduction}
In physics, the Landau-Lifshtiz (LL) equation is a fundamental evolution equation for the ferromagnetic spin chain and was proposed on the phenomenological ground in studying the dispersive theory of magnetization of ferromagnets. It was first deduced by Landau and Lifshitz in \cite{LL}, and then proposed by Gilbert in \cite{G} with dissipation. In fact, this equation describes the Hamiltonian dynamics corresponding to the Landau-Lifshitz energy, which is defined as follows.

Let $\Omega$ be a smooth bounded domain in the Euclidean space $\mathbb{R}^3$. The generic point of $\mathbb{R}^3$ is denoted by $x$. We assume that a ferromagnetic material occupies the domain $\Omega\subset\mathbb{R}^3$. Let $u$, denoting magnetization vector, be a mapping from $\Omega$ into a unit sphere $\U^2\subset\mathbb{R}^3$. The Landau-Lifshitz energy of map $u$ is defined by
$$\mathcal{E}(u):=\int_{\Om}\Phi(u)\,dx+\frac{1}{2}\int_{\Om}|\n u|^2\,dx-\frac{1}{2}\int_{\Omega}h_d\cdot u\,dx.$$
Here the $\n $ denote the gradient operator and $dx$ is the volume element of $\mathbb{R}^3$.

In the above Landau-Lifshitz functional, the first and second terms are the anisotropy and exchange energies, respectively. $\Phi(u)$ is a real function on $\U^2$. If one only considers uniaxial materials with easy axis parallel to the OX-axis, for which $\Phi(u)= u_2^2 + u_3^2$. The last term is the self-induced energy, and $h_d = -\nabla w$ is the demagnetizing field. The magnetostatic potential $w$ solves the differential equation
$$\Delta w = \mbox{div}(u\chi_{\Omega})$$
in $\mathbb{R}^3$ in the sense of distributions, where $\chi_{\Omega}$ is the characteristic function of $\Om$ and $\De$ is the Laplace operator on $\mathbb{R}^3$.

More precisely, the solution to the Poisson equation is
$$w(x) = \int_{\Omega} \nabla N(x-y)u(y)dy,$$
where $N(x) = -\frac{1}{4\pi|x|}$ is the Newtonian potential in $\mathbb{R}^3$.

The LL equation with dissipation, which can be written as
\[
u_t - \alpha u\times u_t = u\times h,\]
where ``$\times$" denotes the cross production in $\Real^{3}$ and the local field $h$ of $\mathcal{E}(u)$ can be derived as
$$h:=-\frac{\delta\mathcal{E}(u)}{\delta u}= \Delta u + h_d -\nabla_u\Phi.$$
Meanwhile the constant $\alpha$ is the damping parameter, which is characteristic of the material, and is usually called the Gilbert damping coefficient.  Hence the Landau-Lifshitz equation with damping term is also called the Landau-Lifshitz-Gilbert (LLG) equation in the literature.

Moreover, the interplay of spin-polarized electrical currents and local magnetic moments revealed in the original theoretical studies of Berger \cite{Be} and Slonczewski \cite{S} has stimulated a great deal of research effort in the nanoscale magnetic structures.
In this paper we are interested in a mathematical model describing magnetization dynamics by spin-polarized current.  With a prescribed current density $J (x, t)$, the time evolution of the magnetization vector $u(x,t)$ may be described by the LLG equation, see for example \cite{KTS},
\begin{equation}\label{cur}
\left\{
\begin{array}{llll}
\aligned
&u_t-\alpha u\times u_t=-\gamma u\times[h +\beta(J\cdot\n u)]\quad\mbox{in}\,\, \Omega\times(0,T),\\
&u(0,\cdot)=u_0:\,\Omega\rightarrow \U^2,\\
&\frac{\partial u}{\partial\nu}=0\quad\mbox{on}\,\, \partial\Omega,
\endaligned
\end{array}
\right.
\end{equation}
where $T$ is any fixed positive number  and $\nu$ represents the outward unit normal on $\p\Om$.
The term parameterized by the positive constant $\beta$ expresses current-induced torques on $u$. This torque is most commonly termed non-adiabatic and $\beta$ characterizes its strength. The parameter $\gamma>0$ is a gyroscopic ratio. The above initial value map
satisfied by the magnetization is
$$u(x,0) = u_0(x) \quad\quad \mbox{and} \quad\quad |u_0(x)|=1 \,\,\quad a.e.\,\, \mbox{in}\,\,\, \Omega.$$

First, we note a fact that for $u:\Omega\times\mathbb{R}^+\to \U^2$ the following equation
$$ u_t -\alpha u\times u_t=-\gamma u\times (h+\beta J\cdot\n u)$$
is equivalent to
$$u_t= -\frac{\alpha\gamma}{\alpha^2+1}u\times(u\times (h+\beta J\cdot \n u)) - \frac{\ga}{\alpha^2+1}u\times (h+\beta J\cdot \n u).$$
In the sequel, we always assume that $\alpha>0$, $\ga=1+\al^2$. The previous equation can be written as
\begin{equation}\label{cur1}
\left\{
\begin{array}{llll}
\aligned
&u_t= -\al u\times(u\times h_\beta)-u\times h_\beta\quad\quad\mbox{in}\,\, \Omega\times(0,\,T),\\
&u(\cdot, 0)=u_0:\,\,\,\Omega\rightarrow \U^2,\quad\quad \frac{\partial u}{\partial\nu}=0\quad\quad \mbox{on}\,\, \partial\Omega\times(0,\,T),
\endaligned
\end{array}
\right.
\end{equation}
where $$h_\beta\equiv h +\beta J\cdot\n u=\Delta u + h_d -\nabla_u\Phi+\beta J\cdot\n u.$$

\medskip

On the other hand, we are also interested in  the heat flow associated to micromagnetic energy functional with spin-polarized current as follows.
\begin{equation}\label{hf}
\left\{
\begin{aligned}
& u_t=\tau(u)+\(\tilde{h}_\beta(u)-\<\tilde{h}_\beta(u),u\>u\), \quad (x, t)\in \Om\times(0,T),\\
& u(0)=u_0: \Om\to \U^{2},\quad\quad\frac{\partial u}{\partial\nu}=0,\quad \mbox{on}\,\, \p\Om\times [0,T],
\end{aligned}\right.
\end{equation}
where $\tau(u)=\De u+|\n u|^{2}u$ is the tension field, and $\tilde{h}_\beta(u)=h_d -\nabla_u\Phi+\beta J\cdot \n u$. Since $|u|=1$, it can be rewritten in the form
\[\label{hf1}
\begin{cases}
 u_t=-u\times(u\times h_{\beta}),   &(x, t)\in \Om\times(0,T),\\
 u(0)=u_0: \Om\to \U^{2},           &\mbox{on}\,\,\Omega,\\
 \displaystyle\frac{\partial u}{\partial\nu}=0,  &\mbox{on}\,\, \p\Om\times[0,T].
\end{cases}
\]

Generally, let $\Om$ be a bounded smoothly domain in $\Real^{m}$ for $m\geq 3$, we extend the micromagnetic energy functional without the self-induced energy as follows
$$\bar{\mathcal{E}}(u):=\int_{\Omega}\Phi(u)\,dx+\frac{1}{2}\int_{\Omega}|\nabla u|^2\,dx$$
where $u:\Om\to \U^{2}$ is a map and $\Phi: \U^{2}\to \Real^{+}$ is a smooth function. The critical point $u$ of $\bar{\mathcal{E}}$ satisfies the following Euler-Lagrangian equation
$$\tau_\Phi(u)=\tau(u)-\(\n_u\Phi(u)-\<\n_u\Phi(u),u\>u\)=0,$$
this is just the harmonic map with potential. Thus, the following heat flow associated to this energy
\begin{equation}\label{hf2}
\left\{
\begin{aligned}
& u_t=\tau_\Phi(u)+\beta J\cdot \n u,\quad (x, t)\in \Om\times(0,T),\\
& u(0)=u_0: \Om\to \U^{2},\quad\frac{\partial u}{\partial\nu}=0\quad \mbox{on}\,\, \partial\Omega\times [0,T],
\end{aligned}\right.
\end{equation}
is also of significance in mathematical aspect, where $J:\Om\times \Real^{+}\to \Real^{m}$ is a smooth function. Using the property of the cross-product in $\mathbb{R}^3$, it is easy to see that the above equation is equivalent to
\begin{equation}\label{hf-m}
\left\{
\begin{aligned}
& u_t=-u\times(u\times \bar{h}_\beta), \quad (x, t)\in \Om\times(0,T),\\
& u(0)=u_0: \Om\to \U^{2},\quad \frac{\partial u}{\partial\nu}=0\quad \mbox{on}\,\, \partial\Omega\times [0,T].
\end{aligned}\right.
\end{equation}
Here, $$\bar{h}_{\beta}= \Delta u -\n_u\Phi+\beta J\cdot \n u.$$

In fact, $$\frac{DU}{Dt}\equiv u_t-\beta J\cdot \n u$$ can be regarded as a material derivative of $u$ and the equation (\ref{hf-m}) appears in the liquid crystal theory. Indeed, the simplified Ericksen-Leslie system that models the hydrodynamics of nematic liquid crystals in dimension three: for a bounded smooth domain $\Omega\subset\mathbb{R}^3$ (or $\Omega = \mathbb{R}^3$ ) and $0 < T \leq \infty$, $(w, P, u): \Omega\times(0, T)\to\mathbb{R}^3 \times\mathbb{R}\times \mathbb{S}^2$ solves
\begin{equation*}
\left\{
\begin{aligned}
& w_t+w\cdot\nabla w-\nu\Delta w +\nabla P =-\lambda\nabla\cdot(\nabla u\odot\nabla u), &(x, t)\in \Om\times(0,T),\\
& \nabla\cdot w=0, &(x, t)\in \Om\times(0,T),\\
& u_t+w\cdot\nabla u = \alpha(\Delta u + |\nabla u|^2u), &(x, t)\in \Om\times(0,T),
\end{aligned}\right.
\end{equation*}
along with the initial and boundary condition
\[
\begin{cases}
(w(0), u(0))=(w_0, u_0), &(x, t)\in \Om,\\
(w,u)=(0, u_0), &(x, t)\in \partial\Om\times(0,\infty),
\end{cases}
\]
for a given initial datum $(w_0, u_0):\Omega\times(0, T)\to\mathbb{R}^3\times\mathbb{S}^2$ with $\nabla\cdot w_0 = 0$. Here $w:\Om\to\mathbb{R}^3$ represents the velocity field of the fluid, $u: \Om\to\mathbb{S}^2$ is a unit vector field representing the macroscopic orientation of the nematic liquid crystal molecules, and $P: \Om\to \mathbb{R}$ represents the pressure function.  The constants $\alpha$, $\gamma$ and $\nu$ are positive constants. From the mathematical point of view, this is a system strongly coupling the transported heat flow of harmonic maps to $\mathbb{S}^2$ and the nonhomogeneous incompressible Navier-Stokes equation. For more details, we refer to \cite{L}.

Another natural generalization is to consider the above flow in the setting of closed Riemannian manifold. In fact, let $(M,g)$ be a closed Riemannian manifold with dimension $m\geq 3$, we consider the following flow
\begin{equation}\label{hf-m1}
\left\{
\begin{aligned}
& u_t=-u\times(u\times \bar{h}_\beta), \quad (x, t)\in M\times\mathbb{R}^+,\\
& u(0)=u_0: M \to \U^{2},
\end{aligned}\right.
\end{equation}
where $J: M\to TM$ be a section of the tangent bundle of $M$.

\medskip
In recent years, there has been lots of interesting studies for the Landau-Lifshitz equation, concerning its existence, uniqueness and regularities of various kinds of solutions. Before moving on to the next step, we list only a few of the literature that are closely related to our work in the present paper.

For the case $\Om$ is a bounded domain in $\mathbb{R}^3$, Carbou and Fabrie studied a model of ferromagnetic material governed by a nonlinear dissipative Landau-Lifschitz equation (i.e. $\alpha>0$) coupled with Maxwell equations in micromagnetism theory, and they proved the local existence and uniqueness of regular solutions for a so-called quasistatic model in \cite{CF}. Moreover they showed global existence of regular solutions for small data in the 2D case for the Landau-Lifschitz equation. Later, Tilioua \cite{T} (also see \cite{Bo}) employed the penalized method to show the existence of the weak solution to \eqref{cur}, without the anisotropy term $\Phi$, in the case damping constant $\alpha>0$. Recently, in \cite{CJ} the local existence of very regular solutions to (\ref{cur}) was addressed.

Next, we retrospect the work to related flows. The heat flows associated to Landau-Lifschitz functional, defined by \eqref{hf} and \eqref{hf2} in above, can be considered as generation of the flows for harmonic maps into $\U^{2}$. The Global weak solutions of latter flows have been well-researched by Y.M. Chen and el in \cite{ChenYM1, ChenYM2}, by also using the classical Ginzburg-Landau penalized method. Very recently, we achieved a blow-up result of finite time for the flow related with the micromagnetic energy energy in \cite{Chen-Wang}, which is well-known for the heat flows of harmonic map very early.

However, we should mention that the penalized method, used in \cite{T, ChenYM1, ChenYM2} to get global weak solution, may not be effective for equation
      $$\partial_tu-\alpha u\times\partial_t u = -u\times(\Delta u+h_d-\nabla_u\Phi + \beta J\cdot\n u),$$
since the function $\Phi$ is defined on $\U^{2}$, which doesn't match well with their approximated equations. To deal with this term, we would extend $\Phi$ to a function $\tilde{\Phi}$ defined on $\bar{B}_{1}(0)$ to compatible with our approximated equation in Section $3$. Secondly, such methods applied in \cite{T, Bo} is only valid for LLG equations or heat flows related to them in the case that the dimension of the domain space is 3 or 4. More precisely, if $\Om$ be a domain in $\Real^m$ with $m\geq 3$, then, to employ the penalized method one needs to suppose the initial map $u_{0}\in L^{4}(\Om,\Real^{3})$. However, the suitable assumption on $u_{0}$ to get weak solution is that $u_0\in W^{1,2}$. So, it is necessary to embed $W^{1,2}$ to $L^{4}$, under the restriction of dimension of $\Om$
$$\frac{2m}{m-2}\leq 4,$$
that is $m\leq 4$.

In order to overcome the above obstruction, in this paper we follow the idea in \cite{W} to approach the existence problems of the equations with spin currents in general dimension case. One of the crucial ingredients for the presented analysis here is the choices of effective auxiliary approximation equations and test functions. It is the aim of the paper at hand to present a proof of the existence of global weak solution (the definitions are given in Section 2). Our main conclusions can be presented as follows

\begin{thm}\label{thm1}
Let $\Omega$ be a bounded smooth domain in $\mathbb{R}^3$. Assume that $\Phi\in C^{1}(\U^2, \mathbb{R})$, the initial value map $u_0\in H^{1}(\Omega, \U^2)$ and $J\in L^{2}(\Real^+,L^{\infty}(\Om,\Real^{3}))$ is a measurable function of vector value. Then, for any $\alpha > 0$ and any $T>0$, the equation \eqref{cur1} admits a global weak solution $u\in L^{\infty}([0,T],H^{1}(\Om,\U^{2}))\cap W^{1,1}_{2}(\Om\times[0,T],\U^{2})$ with initial value $u_0$.
\end{thm}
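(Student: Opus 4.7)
The plan is to follow Wang's strategy \cite{W}: construct smooth approximants $u^\ep$ of a regularized problem, derive uniform bounds via a carefully tailored energy identity, and pass to the limit using test functions adapted to the sphere constraint.

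\textbf{Setup and approximation.} Since $\Phi$ is defined only on $\U^2$, I first extend it to $\tilde\Phi\in C^1(\bar B_1(0),\Real)$ so that $\n_u\tilde\Phi$ acts on all approximants with $|u^\ep|\le 1$. Regularize $u_0$ by standard smoothing to $u_0^\ep\in C^\infty(\bar\Om,\bar B_1(0))$ with $u_0^\ep\to u_0$ in $H^1$ and $\frac{\p u_0^\ep}{\p\nu}=0$. Set up an approximating Ginzburg-Landau-type system for $u^\ep:\Om\times[0,T]\to\Real^3$ of the form
\begin{equation*}
u^\ep_t=-\al\,u^\ep\times(u^\ep\times h^\ep_\be)-u^\ep\times h^\ep_\be-\frac{\al}{\ep^2}(|u^\ep|^2-1)u^\ep,
\end{equation*}
with $h^\ep_\be=\De u^\ep+h_d^\ep-\n_u\tilde\Phi(u^\ep)+\be J\cdot\n u^\ep$, Neumann data, and initial value $u_0^\ep$. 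The penalty is deliberately placed \emph{outside} the cross products (where it would be annihilated); this is precisely what allows it to contribute to the energy identity and to force $|u^\ep|\to 1$. Local smooth existence of $u^\ep$ follows from standard quasilinear parabolic theory.

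\textbf{Uniform energy bounds.} Testing the equation with the effective combination $h^\ep_\be-\frac{1}{\ep^2}(|u^\ep|^2-1)u^\ep$, using $(u\times h)\cdot h=0$, the self-adjointness of the operator $u\mapsto h_d$ on $L^2$, and the chain rule $u^\ep_t\cdot\n_u\tilde\Phi(u^\ep)=\frac{d}{dt}\tilde\Phi(u^\ep)$, I obtain
\begin{equation*}
\frac{d}{dt}\Bigl[\E(u^\ep)+\frac{1}{4\ep^2}\int_\Om(|u^\ep|^2-1)^2\,dx\Bigr]+c_\al\int_\Om|u^\ep_t|^2\,dx\le \int_\Om u^\ep_t\cdot(\be J\cdot\n u^\ep)\,dx,
\end{equation*}
with $c_\al>0$. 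Cauchy-Schwarz absorbs half of $|u^\ep_t|^2$ into the left, and Gr\"onwall (using $J\in L^2(\Real^+;L^\infty)$) yields the $\ep$-uniform bounds
\begin{equation*}
\sup_{t\in[0,T]}\Bigl(\Norm{\n u^\ep(t)}_{L^2}^2+\ep^{-2}\Norm{|u^\ep|^2-1}_{L^2}^2\Bigr)+\int_0^T\Norm{u^\ep_t}_{L^2}^2\,dt\le C.
\end{equation*}
By Aubin-Lions, after extraction, $u^\ep\to u$ strongly in $L^2(\Om\times(0,T))$ and weakly in $L^2([0,T];H^1)$, with $u^\ep_t\rightharpoonup u_t$ weakly in $L^2$. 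The penalty bound forces $|u|=1$ a.e.

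\textbf{Main obstacle: passage to the limit.} The critical step is passing to the limit in $u^\ep\times\De u^\ep$ and $u^\ep\times(u^\ep\times\De u^\ep)$, since $\De u^\ep$ converges only distributionally. The key algebraic identities are $u\times\De u=\mathrm{div}(u\times\n u)$ and $u\times(u\times h)=(u\cdot h)u-|u|^2 h$; combined with $u\cdot\De u=-|\n u|^2$ on $\U^2$, the weak formulation of the tangential part produces a factor $(u\cdot \phi)|\n u|^2$ on test functions $\phi$, and this is the genuine obstruction because $|\n u^\ep|^2$ converges only weakly in $L^1$ (in the sense of defect measures). The remedy---the second ``effective test function'' of Wang---is to test against \emph{tangential} test functions $\psi^\ep=\phi-(\phi\cdot u^\ep)u^\ep$ for $\phi\in C_c^\infty(\Om\times[0,T),\Real^3)$: in the limit $|u|=1$ one has $u\cdot\psi=0$, and the problematic normal-component term $(u\cdot\psi)|\n u|^2$ vanishes identically. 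What remains is bilinear in $u^\ep$ and $\n u^\ep$, which passes via the strong $L^2$ convergence of $u^\ep$ together with the weak $L^2$ convergence of $\n u^\ep$. Subcritical corrections from $|u^\ep|\ne 1$ on the approximate level are controlled by the penalty bound. The $h_d^\ep$-term passes by the $L^2$-continuity of the Newtonian potential, the $\tilde\Phi$-term by dominated convergence, and the spin-current term by bilinearity. Inheritance of the initial value and Neumann boundary condition is standard via trace and weak-continuity arguments, yielding the claimed weak solution $u\in L^\infty([0,T];H^1(\Om,\U^2))\cap W^{1,1}_2(\Om\times[0,T],\U^2)$.
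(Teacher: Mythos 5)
Your proposal is a Ginzburg--Landau penalization argument, which is not the route the paper takes --- the authors explicitly set up an $\ep\De u$ viscosity regularization combined with the truncation $\J(u)=u/\max\{1,|u|\}$, solve the regularized problem by Galerkin approximation, obtain the a priori bounds by testing with $-\De u^n_\ep$ and $\p_t u^n_\ep$, prove $|u_\ep|\le 1$ by a maximum-principle lemma (so that $\J(u_\ep)=u_\ep$), recover $|u|=1$ in the limit from $\int_\Om|u_\ep|^2\,dx+2\ep\int_0^T\int_\Om|\n u_\ep|^2\,dx\,dt=\mathrm{Vol}(\Om)$, and pass to the limit by pairing the weak identities obtained with the test functions $\varphi$ and $u\times\varphi$ (the term $u_\ep\times\De u_\ep=\mathrm{div}(u_\ep\times\n u_\ep)$ is handled by integration by parts, so no $|\n u|^2$ defect measure ever appears in the Gilbert form). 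A penalization proof of Theorem 1.1 is in principle available in dimension $3$ (this is essentially Tilioua's result, as the introduction notes), but your particular implementation has two genuine gaps.

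First, the energy identity in your second step does not come out as claimed. Because you place the penalty \emph{outside} the cross products, the equation is in Landau--Lifshitz form rather than Gilbert form, and testing with $H=h^\ep_\be-\ep^{-2}(|u^\ep|^2-1)u^\ep$ produces, after expanding $-\al\ep^{-2}\int_\Om(|u^\ep|^2-1)\,u^\ep\cdot h^\ep_\be\,dx$ and integrating $u^\ep\cdot\De u^\ep=\tfrac12\De|u^\ep|^2-|\n u^\ep|^2$ by parts, the signless term
\begin{equation*}
\frac{\al}{\ep^2}\int_\Om\bigl(|u^\ep|^2-1\bigr)\,|\n u^\ep|^2\,dx,
\end{equation*}
which is not controlled by the quantities on the left (the penalty bound only gives $\|\,|u^\ep|^2-1\,\|_{L^2}\lesssim\ep$, leaving a factor $\ep^{-1}\|\n u^\ep\|_{L^4}^2$ that is not a priori bounded); moreover testing with $H$ yields the dissipation $\al\int_\Om|u^\ep\times H|^2dx$, not $c_\al\int_\Om|u^\ep_t|^2dx$. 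The clean identity you write down is the one obtained from the \emph{Gilbert} form $\al u_t+u\times u_t=(1+\al^2)\,h_\ep$ with the penalty inside $h_\ep$ and $u_t$ as the test function; with your formulation it must be re-derived, and I do not see how to close it. Second, your approximate problem is a degenerate (non-uniformly-parabolic) quasilinear system --- its principal part $-\al u\times(u\times\De u)-u\times\De u$ vanishes where $u$ does --- so "standard quasilinear parabolic theory" does not give global (or even unconditional local) solvability, and nothing in your scheme guarantees $|u^\ep|\le1$ pointwise, which you nevertheless use implicitly: $\n_u\tilde\Phi(u^\ep)$ is only defined for $|u^\ep|\le1$, and the $(1+\al^2)$-type bounds on the cross products require it. The paper's truncation $\J$ together with Lemma 3.3 (the maximum principle for $w_t-\ep\De w=\J(w)\times F$) is precisely the device that closes these two holes, and it is why the $\ep\De u$ term is indispensable there.
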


\begin{thm}\label{thm2}
Let $\Om$ be a bounded smooth domain in $\Real^{m}$ with $m\geq 3$.  Assume that $\Phi\in C^{1}(\U^2, \mathbb{R})$, the initial value maps $u_0\in H^{1}(\Om, \U^2)$ and $J\in L^{2}(\Real^+,L^{\infty}(\Om,\Real^{3}))$ is a measurable function of vector value. Then, for any $T>0$, the heat flow \eqref{hf-m} admits a global weak solution $u\in L^{\infty}([0,T],H^{1}(\Om,\U^{2}))\cap W^{1,1}_{2}(\Om\times[0,T],\U^{2})$ with initial value $u_0$.
\end{thm}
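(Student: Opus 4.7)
The plan is to follow Wang \cite{W} and construct the solution by Ginzburg--Landau penalization, exploiting the fact that testing against cross products with $u^\ep$ annihilates the penalty term and thereby circumvents the $H^1\hookrightarrow L^4$ obstruction that restricts the methods of \cite{T,ChenYM1,ChenYM2} to $m\leq 4$. First, extend $\Phi\in C^1(\U^2,\Real)$ to $\tilde\Phi\in C^1(\Real^3,\Real)$ with $\tilde\Phi\geq 0$ and with $\tilde\Phi$ and $\n_u\tilde\Phi$ globally bounded (compose with the nearest-point retraction onto $\bar B_1(0)$, smoothly truncate outside, and add a constant), and consider the approximating problem
\begin{equation*}
\begin{cases}
u^\ep_t=\De u^\ep-\n_u\tilde\Phi(u^\ep)+\be J\cdot\n u^\ep+\frac{1}{\ep}(1-|u^\ep|^2)u^\ep, & (x,t)\in \Om\times(0,T),\\
\displaystyle\frac{\p u^\ep}{\p\nu}=0, & (x,t)\in\p\Om\times(0,T),\\
u^\ep(\cdot,0)=u_0, & x\in\Om.
\end{cases}
\end{equation*}
This is a semilinear parabolic system with bounded nonlinearity; Galerkin approximation together with the $\ep$-uniform estimates below produces a global strong solution on $[0,T]$.

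Second, pair the equation with $u^\ep_t$, integrate over $\Om$, and absorb the drift term by Young's inequality to obtain, with the modified energy $F^\ep(u):=\int_\Om(\tfrac12|\n u|^2+\tilde\Phi(u)+\tfrac{1}{4\ep}(1-|u|^2)^2)\,dx$,
\begin{equation*}
\tfrac{1}{2}\int_\Om|u^\ep_t|^2\,dx+\tfrac{d}{dt}F^\ep(u^\ep)\leq\tfrac{\be^2}{2}\|J(t)\|_\infty^2\|\n u^\ep\|_{L^2}^2.
\end{equation*}
Since $|u_0|=1$ a.e.\ the initial penalty contribution vanishes and $F^\ep(0)\leq C$ uniformly in $\ep$; Gronwall then produces the $\ep$- and $m$-independent estimates
\begin{equation*}
\sup_{[0,T]}\|\n u^\ep\|_{L^2}^2+\int_0^T\|u^\ep_t\|_{L^2}^2\,dt+\sup_{[0,T]}\frac{1}{\ep}\int_\Om(1-|u^\ep|^2)^2\,dx\leq C.
\end{equation*}
Extract a subsequence with $u^\ep\rightharpoonup u$ in $L^2(0,T;H^1)$ and $u^\ep_t\rightharpoonup u_t$ in $L^2(0,T;L^2)$; the Aubin--Lions lemma upgrades this to $u^\ep\to u$ strongly in $L^2(0,T;L^2)$ and almost everywhere, and the penalty bound forces $|u|=1$ a.e.

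Third, identify the limit equation by rewriting the approximate PDE in cross-product form. For any $\psi\in C^\infty(\bar\Om\times[0,T),\Real^3)$ with $\psi(\cdot,T)\equiv 0$, crossing the PDE with $u^\ep$ kills the penalty (as $u^\ep\times u^\ep=0$), and integration by parts under the Neumann condition yields
\begin{equation*}
\int_0^T\!\!\int_\Om(u^\ep\times u^\ep_t)\cdot\psi\,dx\,dt+\int_0^T\!\!\int_\Om(u^\ep\times\n u^\ep)\cdot\n\psi\,dx\,dt=\int_0^T\!\!\int_\Om\bigl(-u^\ep\times\n_u\tilde\Phi(u^\ep)+\be\,u^\ep\times(J\cdot\n u^\ep)\bigr)\cdot\psi\,dx\,dt.
\end{equation*}
The strong $L^2(0,T;L^2)$ convergence of $u^\ep$ paired with the weak $L^2(0,T;L^2)$ convergence of $\n u^\ep$ and $u^\ep_t$ identifies every integrand in the limit, and the a.e.\ convergence together with $|u|=1$ gives $\n_u\tilde\Phi(u^\ep)\to\n_u\Phi(u)$ by dominated convergence. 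The resulting identity $u\times u_t=u\times\bar{h}_\be$, crossed once more with $u$ and combined with $|u|=1$ and $u\cdot u_t=0$, recovers $u_t=-u\times(u\times\bar{h}_\be)$, i.e.\ \eqref{hf-m}, in the sense of distributions.

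The main obstacle is the passage to the limit in the bilinear product $u^\ep\times\n u^\ep$: $\n u^\ep$ converges only weakly in $L^2$, and absent additional structure one would need strong control of $u^\ep$ in $L^4$, which via $H^1\hookrightarrow L^4$ forces $m\leq 4$. The cross-product rewriting is exactly what sidesteps this difficulty, since it exhibits the quadratic nonlinearity as a product of one strongly convergent and one weakly convergent factor, so that the argument works uniformly in the dimension $m\geq 3$.
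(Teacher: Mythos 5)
Your proposal is correct in outline but takes a genuinely different route from the paper. The paper does \emph{not} use Ginzburg--Landau penalization for Theorem \ref{thm2}: it approximates \eqref{hf-m} by the viscous, truncated equation \eqref{app-hf}, $u_t=\ep\De u-\J(u)\times(\J(u)\times\bar{h}_\beta(u))$ with $\J(u)=u/\max\{1,|u|\}$, solves it by Galerkin approximation, proves $|u_\ep|\le 1$ via the maximum-principle Lemma \ref{mpl} (so that $\J(u_\ep)=u_\ep$ and $\Phi$ only needs extending to $\bar{B}_1(0)$), and derives the key $\ep$-uniform bound on $\int_0^T\int_\Om|u_\ep\times\De u_\ep|^2\,dx\,dt$ by testing with $-\De u_\ep$ and $\partial_t u_\ep$. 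You instead penalize with $\frac{1}{\ep}(1-|u^\ep|^2)u^\ep$ and obtain $|u|=1$ only in the limit from the penalty energy. Both routes converge at the same final step: the quadratic term is written as the divergence of $u^\ep\times\n u^\ep$, passed to the limit as a strong-times-weak product, and \eqref{hf-m} is recovered by testing with $u\times\varphi$ and using $|u|=1$ and $\<u,u_t\>=\<u,\n u\>=0$ --- exactly the computation following \eqref{whf-c}. Your observation that the cross-product rewriting makes the argument dimension-independent is correct and is essentially the classical argument of \cite{ChenYM1} (and of \cite{AS} for LLG); since $|u_0|=1$ a.e.\ the initial penalty energy vanishes, so the $H^1\hookrightarrow L^4$ obstruction the paper attributes to the penalized method of \cite{T,Bo} does not arise in your setup. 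What the paper's truncation buys instead is the pointwise bound $|u_\ep|\le 1$ \emph{before} the limit and the extra dissipation term $\int|u_\ep\times\De u_\ep|^2$, which it also exploits for Theorem \ref{thm1}.

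Three points in your write-up need repair, though none is fatal. First, the penalty term $\frac{1}{\ep}(1-|u^\ep|^2)u^\ep$ is cubic, not a ``bounded nonlinearity''; global solvability for fixed $\ep$ still holds, but it comes from the Ginzburg--Landau energy bound (which also supplies the uniform $L^\infty([0,T],L^4(\Om))$ control of $u^\ep$ needed to make sense of the cubic term), not from boundedness of the right-hand side. Second, ``compose with the nearest-point retraction onto $\bar{B}_1(0)$'' does not define an extension of $\Phi$ off $\U^2$; you need the radial projection $z\mapsto z/|z|$ cut off near the origin, as in the paper's construction of $\tilde{\Phi}$, and --- unlike the paper --- you must extend to all of $\Real^3$ because no pointwise bound on $u^\ep$ is available before the limit. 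Third, ``crossing $u\times u_t=u\times\bar{h}_\beta$ once more with $u$'' is not literally meaningful, since the term $u\times\De u$ survives in the limit only as the distribution $\nabla\cdot(u\times\n u)$ with $u\times\n u\in L^2$; the correct procedure is to insert $u\times\varphi$ as a test function in the weak identity (admissible by density, since $u\in L^\infty\cap L^{\infty}([0,T],H^1(\Om))$) and expand $\<u\times\n u,\n(u\times\varphi)\>=\<\n u,\n\varphi\>-|\n u|^2\<u,\varphi\>$, which lands exactly on the weak formulation of Definition 2.2.
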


As a corollary, we have
\begin{thm}\label{thm3}
Let $(M,g)$ be a closed $m$-dimensional Riemannian manifold, $m\geq 3$.  Assume that $\Phi\in C^{1}(\U^2, \mathbb{R})$, the initial value maps $u_0\in H^{1}(M, \U^2)$ and $J\in L^{2}(\Real^+,L^{\infty}(M,TM))$ is a measurable function of vector value. Then,  the heat flow \eqref{hf-m1} admits a global weak solution $u\in L^{\infty}([0,T],H^{1}(M,\U^{2}))\cap W^{1,1}_{2}(M\times[0,T],\U^{2})$ with initial value $u_0$.
\end{thm}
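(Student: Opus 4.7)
The plan is to derive Theorem \ref{thm3} from the argument establishing Theorem \ref{thm2} with the modifications dictated by the Riemannian structure on $M$. Since $M$ is closed, the Neumann boundary condition present in \eqref{hf-m} is vacuous; the only substantive changes are that the Euclidean Laplacian is replaced by the Laplace--Beltrami operator $\De_{g}$ and that $dx$ is replaced by $d\vol_{g}$. Every energy identity, cross-product computation, and test-function manipulation used to control the spin-current term $\be J\cdot \n u$ and the anisotropy term $\n_{u}\Phi$ in Theorem \ref{thm2} is intrinsic and carries over to this reformulation.

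Concretely, I would reproduce on $M\times[0,T]$ the auxiliary approximation scheme used for Theorem \ref{thm2}, obtaining smooth approximants $u^{\ep}$ from standard parabolic theory on closed manifolds. Because $\p M=\emptyset$, integrating $\De_{g}u^{\ep}\cdot u^{\ep}_{t}$ by parts against the chosen test function produces no boundary contribution, and the key a priori inequality
\[
\int_{0}^{T}\!\int_{M}|u^{\ep}_{t}|^{2}\,d\vol_{g}\,dt+\sup_{t\in[0,T]}\bar{\E}_{\ep}(u^{\ep})(t)\le C
\]
follows with $C$ depending only on $\bar{\E}(u_{0})$, $\|\Phi\|_{C^{1}}$, $\|J\|_{L^{2}(\Real^{+},L^{\infty})}$, and $\be$; the $\be J\cdot\n u^{\ep}$ term is absorbed via the $L^{\infty}$ bound on $J$ and a Gronwall inequality, exactly as in Theorem \ref{thm2}. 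Compactness of $M$ combined with the Aubin--Lions lemma then yields a subsequence converging strongly in $L^{2}(M\times[0,T])$; the penalty forces $|u|=1$ a.e.\ in the limit, and the cross-product nonlinearity passes to the limit to give the weak form of \eqref{hf-m1}.

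The main obstacle is verifying that the specific test-function identities used in Section 4 really do transfer to the intrinsic setting, especially those that involve products with $\n u$ and commutators with $\De_{g}$. The cleanest way to handle this is to fix a finite coordinate atlas $\{(U_{\al},\phi_{\al})\}$ on $M$ with subordinate partition of unity $\{\eta_{\al}\}$, test the approximate equation against $\eta_{\al}$ times the relevant Euclidean test function in each chart, and sum over $\al$. On each $U_{\al}$ the metric coefficients and Christoffel symbols are uniformly bounded, so the estimates obtained for bounded Euclidean domains in Theorem \ref{thm2} apply up to controlled multiplicative constants, and the cross-product structure is unaffected since $u$ takes values in the ambient $\Real^{3}$. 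Summing via the partition of unity recovers the global intrinsic bounds on $M$ needed to run the limit argument, and produces a weak solution in $L^{\infty}([0,T],H^{1}(M,\U^{2}))\cap W^{1,1}_{2}(M\times[0,T],\U^{2})$.
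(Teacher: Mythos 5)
Your proposal matches the paper's treatment: the paper proves Theorem \ref{thm3} in a single line by observing that the argument for Theorem \ref{thm2} carries over to a closed manifold, where the Neumann condition is vacuous and $\De$, $dx$ are replaced by $\De_g$, $d\mu_g$, exactly as you describe. One terminological caveat: the constraint $|u|=1$ is recovered not by a Ginzburg--Landau penalty (which the paper explicitly avoids) but by testing the $\J$-truncated approximate equation with $u_\ep$ itself and combining $\int_M|u_\ep|^2+2\ep\int_0^T\int_M|\n u_\ep|^2=\vol(M)$ with the bound $|u_\ep|\le 1$ from Lemma \ref{mpl}; otherwise your outline is consistent with the paper's route.
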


The rest of our paper is organized as follows. In section \ref{s: pre}, we introduce the basic notations on Riemannian and some critical preliminary lemmas. Meanwhile the definitions of weak solutions to equations \eqref{cur1}\eqref{hf-m}\eqref{hf-m1} will also be given.
In section \ref{s: LL-c}, we give the proof of Theorem \ref{thm1}. Finally, the proof of Theorem \ref{thm2} and \ref{thm3} will built up in section \ref{s: hf-m}.

\section{Preliminary}\label{s: pre}
\subsection{Notations on Riemannian manifold}
In this section, we recall some notations on manifolds. Let $(M, g)$ and $(N,\tilde{g})$ be two Riemannian manifolds and $N$ be embedded isometrically in $\Real^K$. The Dirichlet-Schmidt energy of a smooth map $u=(u_1,\dots,u_K)$ from $M$ into $N\subset \Real^{K}$ is defined by
$$E(u)=\frac{1}{2}\int_M|\nabla u|^2\,d\mu_{g},$$
where $d\mu_{g}$ is the volume element induced on $M$. The tension field of a map $u$  is given by
 $$\tau(u)=\Delta_g u +A(u)(\n u, \n u),$$
 where $\Delta_g$ denotes the Laplace-Beltrami operator on $(M, g)$ and $A(\cdot,\cdot)$ is the second fundamental form of the embedding $(N,\tilde{g})\subset\Real^K$. More precisely, in local charts $(x^{1}, \dots,x^{n})$ of $M$, it can be written as
 $$\tau(u)^\th=\Delta_g u_\th + g^{ij}(x)\Ga^\th_{\beta\ga}(u)\frac{\p u_\beta}{\p x_i}\frac{\p u_\ga}{\p x_j}.$$
 Here, $\Ga^\al_{\beta\ga}$ is the Christoffel symbols of the Riemannian connection on $(N, \tilde{g})$ and
 \[
 \De_g u_\th=\frac{1}{\sqrt{g}}\frac{\partial}{\partial x^i}\big(g^{ij}\sqrt{g}\frac{\partial u_\th}{\partial x^j} \big),\,\,\,\,\quad\mbox{for}\quad\th=1,2,...,K.
 \]
 For convenience we always denote $\De_g$ by $\De$. In particular, if $(N,\tilde{g})=(\U^{2},g_{\U^{2}})$, where $g_{\U^{2}}$ is the metric induced by Euclidean metric on $\Real^{3}$, we have
 $$\tau(u)=\De u+|\n u|^{2}u.$$

 On the other hand, for a smooth function $J: M\to TM$, we denote
 \[
 J\cdot\nabla u=(J\cdot\nabla u_1,\,...\, , J\cdot\nabla u_m)
 \]
 where
 \[
 J\cdot\nabla u_\th=J_{i}\frac{\partial u_\th}{\partial x^p}g^{ij},\,\,\,\,\quad\mbox{for}\quad \th=1,2,...,K.
 \]
 \medskip

Now, we define the Sobolev spaces of the functions from $M$ to $N$ as follows
 $$H^k(M, N)=\{u\in H^k(M, \mathbb{R}^K):\, u(x)\in N \,\text{a.e.}\,\, x\in M\}.$$
Moreover, we define
 $$W^{k,l}_p(M\times [0,T], N)=\{u\in W^{r,s}_p(M\times[0,T], \mathbb{R}^K): \,u(x)\in N \,\text{a.e.}\,\, x\in M\},$$
where $k,\,l\in \mathbb{N}$  and $p\geq 1$, $T>0$.
\subsection{Weak solution}
Now, we need to give the definition of the weak solutions.
\begin{defn}
	Let $\Om$ be a bounded smooth domain in $\Real^{3}$, $u\in L^{\infty}([0,T],\, H^1(\Omega,\U^2))\cap W^{1,1}_2(\Omega\times[0,T],\,\U^2)$ is called the weak solutions to equation \eqref{cur1} with initial values $u_0$ if $u$ satisfies
	\[
	\aligned
	&\int_{0}^{T}\int_{\Om}(\< u_t,\varphi\>-\al\< u\times u_t,\varphi\>)\,dxdt \\
	=&(1+\al^{2})\int_{0}^{T}\int_{\Om}\(\< u\times\n u,\nabla\varphi\> -\<u\times(-\nabla_u\Phi+h_d+\beta J\cdot\n u),\, \varphi\>\)\,dxdt,
	\endaligned
	\]
	for all $\varphi\in C^{\infty}(\bar{\Om}\times[0,T],\,\U^2)$, where $\bar{\Om}$ is the closure of $\Om$ in $\Real^{3}$.
	\medskip	
\end{defn}

\medskip
 In the case of arbitrary dimensions, we have the following definitions of weak solutions to heat flows \eqref{hf-m} and \eqref{hf-m1}.
 \begin{defn}
 Let $\Om$ be a bounded smooth domain in $\Real^{m}$ with $m\geq 3$. We say $u\in L^{\infty}([0,T],\, H^1(\Om,\U^2))\cap W^{1,1}_2(\Om\times[0,T],\,\U^2)$ is a global weak solution of heat flow \eqref{hf-m} with initial data $u_0$ if
 	\begin{eqnarray*}
 		&&\int_0^T\int_{\Omega}\<\partial_tu,\varphi\>dxdt+\int_0^T\int_{\Om}\<\n u,\n \varphi\>dxdt\\
 		&=&\int_0^T\int_{\Om}|\n u|^{2}\<u,\varphi\>dxdt+\int_0^T\int_{\Om}\<u\times(-\n_{u}\Phi+\beta J\cdot \n u),u\times\varphi\>dxdt
 	\end{eqnarray*}
 	for all $\varphi\in C^{\infty}(\bar{\Om}\times[0,T],\U^{2})$.
 \end{defn}
\begin{defn}
	Let $(M,g)$ be a $m$-dimensional Riemannian manifold with $m\geq 3$. We say $u\in L^{\infty}([0,T],\, H^1(M,\U^2))\cap W^{1,1}_2(M\times[0,T],\,\U^2)$ is a global weak solution of heat flow \eqref{hf-m1} with initial data $u_0$ if
	\begin{eqnarray*}
		&&\int_0^T\int_{M}\<\partial_tu,\varphi\>d\mu_g dt+\int_0^T\int_{M}\<\n u,\n \varphi\>d\mu_gdt\\
		&=&\int_0^T\int_{M}|\n u|^{2}\<u,\varphi\>d\mu_g dt+\int_0^T\int_{M}\<u\times(-\n_{u}\Phi+\beta J\cdot \n u),u\times\varphi\>d\mu_g dt
	\end{eqnarray*}
	for all $\varphi\in C^{\infty}(M\times[0,T],\U^{2})$.
\end{defn}
\subsection{Some analysis results}\

For later application, we firstly introduce some regular results. The following estimate of demagnetizing field $h_d$ and the lemma about equivalent norm for Sobolev function with Neumann boundary condition can be found in \cite{CF}\cite{CJ}\cite{Chen-Wang}.
\begin{lemma}\label{es-h_d}
	Let $p\in(1,\infty)$, $\Om$ be a bounded smooth domain in $\Real^{3}$. Assume that $u \in W^{k,p}(\Om,\Real^{3})$ for $k\in \mathbb{N}$, then the restriction of $h_d(u)$ to $\Om$ belongs to $W^{k,p}(\Om, \Real^{3})$. Moreover, there exists constants $C_{k,p}$ independent of $u$, such that
	$$\lVert h_d(u)\rVert_{W^{k,p}(\Om)}\leq C_{k,p}\lVert u\rVert_{W^{k,p}(\Om)}.$$
Moreover, $h_{d}:W^{k,p}(\Om, \Real^{3})\to W^{k,p}(\Om, \Real^{3})$ is a linear bounded operator.
\end{lemma}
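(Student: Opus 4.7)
The plan is to exploit the explicit representation $w(x) = (N \ast \operatorname{div}(u\chi_\Om))(x)$ together with Calder\'on--Zygmund theory on $\Real^3$. \textbf{Step 1 (reduction to a singular integral).} I would first move the divergence onto $N$ in the sense of distributions to write
\[
h_d(u) = -\nabla w = -(\nabla \otimes \nabla N) \ast (u\chi_\Om),
\]
where $\nabla\otimes\nabla N$ is a matrix whose entries are second derivatives of the Newtonian potential, each a Calder\'on--Zygmund kernel whose Fourier multiplier is proportional to $\xi_i\xi_j/|\xi|^2$ (equivalently, a composition of two Riesz transforms, modulo a harmless multiple of the identity coming from the Dirac part of $\partial_i\partial_j N$).

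\textbf{Step 2 (the case $k=0$).} By the classical $L^p$-boundedness of the Riesz transforms for $1<p<\infty$, the operator $Tf := -(\nabla\otimes\nabla N)\ast f$ is bounded from $L^p(\Real^3,\Real^3)$ to itself. Since the zero extension satisfies $\|u\chi_\Om\|_{L^p(\Real^3)} = \|u\|_{L^p(\Om)}$, restricting $Tu$ back to $\Om$ yields $\|h_d(u)\|_{L^p(\Om)}\leq C_p\|u\|_{L^p(\Om)}$, which is the base case.

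\textbf{Step 3 (higher regularity).} The naive move of differentiating under the convolution fails because $u\chi_\Om$ jumps across $\p\Om$, so I would proceed via a transmission-problem viewpoint. In the sense of distributions one has
\[
\operatorname{div}(u\chi_\Om) = \chi_\Om\operatorname{div}(u) - (u\cdot\nu)\,\delta_{\p\Om},
\]
so that $w$ solves $\De w = \operatorname{div}(u)$ in $\Om$, $\De w = 0$ in $\Real^3\setminus\bar{\Om}$, with transmission conditions $[w]_{\p\Om}=0$, $[\p_\nu w]_{\p\Om}= u\cdot\nu$, plus decay at infinity. Interior elliptic regularity on $\Om$ upgrades one derivative from the source $\operatorname{div}(u)\in W^{k-1,p}(\Om)$, while the jump datum $u\cdot\nu$ lies in the trace space $W^{k-1/p,p}(\p\Om)$ with $\|u\cdot\nu\|_{W^{k-1/p,p}(\p\Om)}\lesssim \|u\|_{W^{k,p}(\Om)}$. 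The Agmon--Douglis--Nirenberg estimates for this transmission problem, bootstrapped in $k$ starting from the Step 2 base case, produce $\|h_d(u)\|_{W^{k,p}(\Om)}\leq C_{k,p}\|u\|_{W^{k,p}(\Om)}$. Linearity is immediate since both $u\mapsto u\chi_\Om$ and the convolution with $\nabla\otimes\nabla N$ are linear, so $h_d$ is a bounded linear operator on $W^{k,p}$ as claimed.

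The main obstacle I anticipate is precisely Step 3: one cannot commute derivatives through the characteristic function $\chi_\Om$, so the surface contribution $-(u\cdot\nu)\delta_{\p\Om}$ has to be absorbed either via single-layer potential estimates on $\p\Om$ or through the transmission framework above, and some care is needed at each inductive level to match the fractional trace regularity of $u\cdot\nu$ on $\p\Om$ with the regularity gain coming from elliptic theory.
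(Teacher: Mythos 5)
The paper does not actually prove this lemma: it is quoted as a known result with references to \cite{CF}, \cite{CJ} and \cite{Chen-Wang}, so there is no internal proof to compare against. Your argument is, in outline, exactly the standard proof given in those references: represent $h_d(u)=-(\n\otimes\n N)\ast(u\chi_\Om)$, get the $k=0$ case from Calder\'on--Zygmund/Riesz-transform boundedness on $L^p(\Real^3)$ for $1<p<\infty$, and obtain higher regularity by splitting $\mathrm{div}(u\chi_\Om)=\chi_\Om\,\mathrm{div}(u)-(u\cdot\nu)\delta_{\p\Om}$ into a volume potential and a single-layer potential (equivalently, the transmission problem you write down). The distributional identity, the sign of the surface term, and the trace bound $\lVert u\cdot\nu\rVert_{W^{k-1/p,p}(\p\Om)}\lesssim\lVert u\rVert_{W^{k,p}(\Om)}$ are all correct, and linearity is indeed immediate. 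Two points in Step 3 deserve to be made explicit rather than waved at: (i) the transmission problem is posed on the unbounded set $\Real^3$, so the exterior estimates require either weighted spaces at infinity or, more simply, the observation that only the restriction to $\Om$ is needed, so one can localize near $\p\Om$ and use the explicit decay of the convolution far from $\mathrm{supp}(u\chi_\Om)$; and (ii) the ``Agmon--Douglis--Nirenberg estimate for the transmission problem'' is really carried by the mapping property of the single-layer potential, $S:W^{k-1/p,p}(\p\Om)\to W^{k+1,p}(\Om)$, which is where the smoothness of $\p\Om$ enters and which should be cited or proved. With those two points filled in, your proof is complete and matches the route taken in the cited literature.
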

\begin{lemma}\label{eq-norm}
Let $\Om$ be a bounded smooth domain in $\Real^{m}$ and $k\in \mathbb{N}$. There exists a constant $C$ such that for all $u\in H^{k+2}(\Om)$ with $\frac{\p u}{\p \nu}|_{\Om}=0$,
\begin{equation}\label{eq-n}
\norm{u}_{H^{2+k}(\Om)}\leq C(\norm{u}_{L^{2}(\Om)}+\norm{\De u}_{H^{k}(\Om)}).
\end{equation}	
\end{lemma}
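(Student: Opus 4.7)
The plan is to derive \eqref{eq-n} from standard elliptic regularity for the Neumann Laplacian, by induction on $k$. The key point is that the Laplacian with homogeneous Neumann data satisfies the Lopatinskii--Shapiro complementing condition, so it admits the full range of $L^2$ a priori estimates up to the boundary.

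For the base case $k=0$, I would set $f:=\De u$ and regard $u$ as the solution of the Neumann problem $\De u = f$ in $\Om$ with $\p u/\p\nu = 0$ on $\p\Om$. Via a partition of unity, interior bounds follow from the classical interior $H^2$ regularity for the Laplacian, while at the boundary one flattens $\p\Om$ in a local chart and applies difference-quotient arguments tangentially together with the equation itself in the normal direction. This yields $\norm{u}_{H^2(\Om)}\le C\bigl(\norm{f}_{L^2(\Om)}+\norm{u}_{L^2(\Om)}\bigr)$, which is exactly \eqref{eq-n} for $k=0$.

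For the inductive step, assume \eqref{eq-n} holds for some $k\ge 0$ and that $\De u\in H^{k+1}(\Om)$. Fix a finite partition of unity $\{\chi_j\}$ subordinate to a cover of $\overline{\Om}$ by interior balls and boundary charts that flatten $\p\Om$ to $\{x_m=0\}$. On interior charts, standard interior regularity improves the exponent from $k+2$ to $k+3$, with commutators $[\De,\chi_j]$ contributing only lower-order terms that are controlled by the inductive hypothesis and interpolation. On a boundary chart, the Laplacian becomes a second-order elliptic operator $L$ with smooth coefficients, and the Neumann condition takes the form $\p_{x_m}u + b\cdot\n_{x'}u = 0$ for a smooth vector field $b$. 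Applying a tangential derivative $\p_{x'}^{\al}$ with $|\al|\le k+1$ to both the equation $Lu=\tilde f$ and the boundary condition, one sees that $\p_{x'}^{\al}u$ again solves an elliptic Neumann-type problem whose right-hand side and boundary data are bounded by $\norm{\De u}_{H^{k+1}}$ plus lower-order terms in $u$; the base case then controls all derivatives of $u$ involving at most two normal derivatives. The remaining higher-order $x_m$-derivatives are recovered algebraically by solving $Lu=\tilde f$ for $\p_{x_m}^2 u$ and differentiating in both directions.

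The main technical obstacle is the bookkeeping of how tangential differentiation interacts with the transformed Neumann condition: one must verify that $\p_{x'}^{\al}$ preserves the boundary condition to leading order, with all correction terms of strictly lower order so that they can be absorbed via the inductive hypothesis and a small-constant interpolation between $L^2(\Om)$ and $H^{k+3}(\Om)$. Once this is established, summing the estimates from the partition of unity gives \eqref{eq-n} with $k$ replaced by $k+1$, closing the induction.
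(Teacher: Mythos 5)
The paper does not actually prove this lemma: it is quoted as a known fact, with the reader referred to \cite{CF}, \cite{CJ} and \cite{Chen-Wang}, so there is no in-text argument to compare yours against. Your proposal supplies the standard self-contained proof --- the $L^{2}$ a priori estimate for the Neumann Laplacian via partition of unity, boundary flattening, tangential differentiation, and algebraic recovery of the normal derivatives from the equation --- and it is correct in outline; this is exactly the argument one finds in the cited sources and in the classical elliptic theory (Agmon--Douglis--Nirenberg type estimates for a boundary condition satisfying the complementing condition). Two small remarks. First, since the lemma only asserts an a priori bound for $u$ already assumed to lie in $H^{k+2}(\Om)$ (no gain of regularity is claimed), the difference-quotient machinery is not strictly needed: all derivatives to be estimated already exist in $L^{2}$, so in the inductive step you may differentiate the localized equation and boundary condition directly and invoke the base case for $\p_{x'}^{\al}u$; difference quotients matter only if you also want the accompanying regularity statement. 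Second, the point you flag as the main technical obstacle --- that tangential differentiation preserves the conormal boundary condition up to lower-order corrections --- does go through, because after flattening the leading coefficient $a^{mm}$ of the transformed operator is bounded away from zero, so the differentiated boundary condition is again an admissible oblique/conormal condition and the correction terms are controlled by $\norm{u}_{H^{k+2}(\Om)}$, which is then absorbed by the inductive hypothesis together with the Ehrling interpolation $\norm{u}_{H^{j}(\Om)}\leq \ep\norm{u}_{H^{k+3}(\Om)}+C_{\ep}\norm{u}_{L^{2}(\Om)}$ valid on a bounded smooth domain. With those points made explicit, your argument is a complete proof of the estimate that the paper simply cites.
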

In particular, we define the the $H^2$-norm of $u$ as follows
$$\norm{u}_{H^{2}(\Om)}:=\norm{u}_{L^2(\Om)}+\norm{\De u}_{L^2(\Om)}.$$
\begin{rem}
	If let $(M,g)$ be a closed Riemannian manifold, the above inequality \eqref{eq-n} also holds for any $u\in H^{k+2}(M)$.
\end{rem}
 Finally, we give a classical compact result in \cite{Sim}, which will be used to get the convergence of solutions to the approximated equation constructed in the next section.

 \begin{lemma}[Aubin-Simon compact lemma]\label{A-S}
 	Let $X\subset B\subset Y$ be Banach spaces with compact embedding $X\hookrightarrow B$, $F$ is a bounded set in $L^{q}([0,T],B)$ for $q>1$.
 	If $F$ is bounded in $L^{1}([0,T],X)$ and $\frac{\p F}{\p t}$ is bounded in $L^{1}([0,T],Y)$, then $F$ is relatively compact in $L^{p}([0,T],B)$, for any $1\leq p<q$.
 \end{lemma}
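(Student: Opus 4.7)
The plan is to reduce the assertion to verifying a Fr\'echet--Kolmogorov-type criterion in $L^{p}([0,T],B)$, whose two ingredients are (a) a uniform time-translation bound on $F$ in $B$, and (b) a compactness-in-$B$ statement for time integrals of elements of $F$. The engine behind both is the classical Ehrling-type interpolation associated with the compact embedding $X\hookrightarrow B\hookrightarrow Y$: for every $\eta>0$ there exists $C_\eta>0$ such that
\[
\|v\|_B \leq \eta\|v\|_X + C_\eta\|v\|_Y, \qquad \forall\, v\in X.
\]
I would prove this first by a standard contradiction argument: if it failed for some $\eta_0$, one could extract $v_n$ with $\|v_n\|_B=1$, $\|v_n\|_X$ bounded, and $\|v_n\|_Y\to 0$; the compact embedding $X\hookrightarrow B$ would supply a $B$-convergent subsequence whose limit must also vanish in $Y$, hence must be $0$ in $B$, contradicting $\|v_n\|_B=1$.

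With this interpolation in hand, I would next establish time equicontinuity of $F$ in $B$. For each $f\in F$ and $0<h<T$, the Bochner identity $f(t+h)-f(t)=\int_t^{t+h}\partial_t f(s)\,ds$ gives $\|f(t+h)-f(t)\|_Y\leq \int_t^{t+h}\|\partial_t f(s)\|_Y\,ds$; integrating in $t\in[0,T-h]$ and swapping order yields $\int_0^{T-h}\|f(t+h)-f(t)\|_Y\,dt\leq h\,M_Y$, where $M_Y$ is a uniform bound on $\partial_t f$ in $L^{1}([0,T],Y)$. Inserting $v=f(t+h)-f(t)$ into the Ehrling inequality and integrating produces
\[
\int_0^{T-h}\|f(t+h)-f(t)\|_B\,dt \leq 2\eta\,M_X + C_\eta\,h\,M_Y,
\]
where $M_X$ bounds $F$ in $L^{1}([0,T],X)$. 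Choosing first $\eta$ small then $h$ small makes the left-hand side vanish uniformly in $f\in F$.

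To conclude relative compactness in $L^{1}([0,T],B)$, I would verify the remaining Kolmogorov--Riesz--Simon hypothesis: the set of indefinite integrals $\{\int_{t_1}^{t_2}f(s)\,ds:\, f\in F\}$ lies in a fixed bounded subset of $X$ (by the $L^{1}([0,T],X)$ bound), hence in a precompact subset of $B$ by the compact embedding; combined with the time equicontinuity just proved, Simon's theorem yields the relative compactness of $F$ in $L^{1}([0,T],B)$. Finally, to upgrade to $L^{p}$ for any $1\leq p<q$, I would apply the interpolation inequality
\[
\|g\|_{L^{p}(B)} \leq \|g\|_{L^{1}(B)}^{\theta}\,\|g\|_{L^{q}(B)}^{1-\theta}, \qquad \theta=\tfrac{q-p}{p(q-1)}\in (0,1],
\]
so that any $L^{1}$-Cauchy subsequence drawn from $F$, being uniformly bounded in $L^{q}([0,T],B)$ by hypothesis, is automatically Cauchy in $L^{p}([0,T],B)$.

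The main technical obstacle I anticipate is the careful handling of the Bochner identity $f(t+h)-f(t)=\int_t^{t+h}\partial_t f(s)\,ds$ when $\partial_t f$ is only known to lie in $L^{1}([0,T],Y)$: one has to interpret $\partial_t f$ as a $Y$-valued distribution and justify the vector-valued fundamental theorem of calculus in this low-regularity setting, typically via a time mollification together with a density argument. A secondary subtle point is that the Kolmogorov--Riesz criterion for Banach-valued $L^{1}$ functions really does require both time equicontinuity and the integrated-in-time compactness in $B$ (rather than pointwise compactness of $\{f(t):f\in F\}$, which need not hold); confirming that these two ingredients are indeed sufficient is the core content of Simon's original argument, which I would cite at that step.
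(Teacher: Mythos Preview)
The paper does not supply a proof of this lemma at all: it is stated as a ``classical compact result'' and attributed to Simon \cite{Sim}, with no argument given. Your sketch is the standard route to this statement---Ehrling interpolation, $L^1(Y)$-translation estimate via the fundamental theorem of calculus, Simon's $L^1(B)$ compactness criterion, then H\"older interpolation between $L^1(B)$ and $L^q(B)$---and is essentially how the result is proved in the cited reference, so there is nothing to compare against in the present paper.

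One small remark on presentation: you invoke ``Simon's theorem'' at the end of the $L^1(B)$ step, which is slightly circular since that theorem is precisely what you are proving; it would be cleaner to phrase that step as an application of the Banach-valued Kolmogorov--Riesz--Fr\'echet criterion (equi-integrability near the endpoints plus time-translation equicontinuity plus precompactness of mollified time averages in $B$), which is the logical content you have already assembled. Otherwise the argument is sound.
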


\section{Landau-Lifshitz Equation with Spin Currents}\label{s: LL-c}

In this section we consider the global well-posedness of \eqref{cur1}. For this end we adopt the following approximate equation
\begin{equation}\label{app-cur}
\left\{
\begin{array}{llll}
\aligned
&u_t=\varepsilon\Delta u-\alpha\mathfrak{J}(u)\times(\mathfrak{J}(u)\times h_\beta(u)) - \mathfrak{J}(u)\times h_\beta(u) \quad \mbox{in}\,\, \Omega\times(0,T),\\
&u(0,\cdot)=u_0:\,\Omega\rightarrow S^2,\quad\quad \frac{\partial u}{\partial\nu}=0\quad \mbox{on}\,\, \partial\Om\times[0,T],
\endaligned
\end{array}
\right.
\end{equation}
where $0<\varepsilon<1$ is a positive constant, $\mathfrak{J}(u)$ is defined by
$$\mathfrak{J}(u)\equiv\frac{u}{\max\{1,|u|\}}$$
and we still denote $h_{\beta}(u)=\De u-\n_u\Phi(\J(u))+h_d(u_\ep)+\beta J\cdot\n u$.

It should also be pointed out that the above $\Phi(u)$ has been extended to the closed ball $\overline{B}(1)\subset\mathbb{R}^3$. In fact, we extend $\Phi(z)$ by
\begin{equation*}
\tilde{\Phi}(z)=
\left\{
\begin{aligned}
&\zeta(|z|^2)\Phi\left(\frac{z}{\max\{\delta_0,|z|\}}\right),\,\, &|z|^2>\delta_0,\\
&0, \,\, &|z|^2\leq\delta_0,
\end{aligned}\right.
\end{equation*}
where $\zeta(t):[0, 1]\to [0, 1]$ is a $C^2$-smooth function with $\zeta(t)\equiv 0$ on $[0, \, 2\delta_0]$ ($2\delta_0<1$) and $\zeta(1)=1$. It is easy to see that $\tilde{\Phi}$ is $C^1$-smooth on $\overline{B}(1)$. For simplicity, we still denote $\tilde{\Phi}$ by $\Phi$.

Next, we will construct a weak solution of \eqref{app-cur} by the classical Galerkin Approximation Method and obtain quantitative energy estimate on its solutions.

\subsection{Galerkin Approximation and a prior estimates}
Let $\Om$ be a bounded smooth domain in $\Real^{3}$, $\ld_{i}$ be the $i^{th}$ eigenvalue of the operator $\De-I$  with Neumann boundary condition, whose corresponding eigenfunction is $f_{i}$. That is,
$$(\De-I)f_{i}=-\ld_{i}f_{i}\,\,\,\,\quad\text{with}\quad\,\,\,\,\frac{\p f_{i}}{\p \nu}|_{\p\Om}=0.$$

Without loss of generality, we assume $\{f_{i}\}_{i=1}^{\infty}$ are completely, standard orthonormal basis of $L^{2}(\Om,\Real^{1})$. Let $H_{n}=span\{f_{1},\dots f_{n}\}$ be a finite subspace of $L^{2}$, $P_{n}:L^{2}\to H_{n}$ be the canonical projection. In fact, for any $f\in L^{2}$, $f^{n}=P_{n}f=\sum_{1}^{n}\<f,f_{i}\>_{L^{2}}f_{i}$ and $\lim_{n\to \infty}\norm{f-f_{n}}_{L^{2}}=0$.

Next, we seek a solution $u^{n}_{\ep}$ in $H_{n}$ to the above Galerkin approximation equation associated to \eqref{app-cur}, i.e.
\begin{equation}\label{app-cur1}
\begin{cases}
\displaystyle\frac{\p u^{n}_{\ep}}{\p t}=\ep\De u^{n}_{\ep}-P_{n}\{\al \J(u^{n}_{\ep})\times(\J(u^{n}_{\ep})\times h_\beta(u^n_\ep))+\J(u^{n}_{\ep})\times h_{\beta}(u^n_\ep)\},&(x,t)\in\Om\times(0,T),\\
u^{n}_{\ep}(x,0)=u^{n}_{0}(x),&x\in\Om.
\end{cases}
\end{equation}
Let $u^{n}_{\ep}=\sum_{1}^{n}g^{n}_{i}(t)f_{i}(x)$, $g^{n}(t)=\{g^{n}_{1}(t)\dots g^{n}_{n}(t)\}$ be a vector-valued function. Then, by a directed calculation we have that $g^{n}(t)$ satisfies the following ordinary differential equation
\begin{equation}\left\{
\begin{aligned}
& \frac{\p g^{n}}{\p t}=F(g^n),\\
& g^{n}(0)=(\<u_{0},f_{1}\>,\dots,\<u_{0},f_{n}\>),
\end{aligned}\right.
\end{equation}
where $F(g^n)$ is locally Lipschitz on $g^n$, since $\J(f)$ is locally Lipschitz on $f$. Hence, there exist a solution $u^{n}_{\ep}$ to \eqref{app-cur1} on $\Om\times[0,T_{0}]$ for some $T_{0}>0$.

Multiplying $u^{n}_\ep$ in both sides of \eqref{app-cur1}, there holds
     $$\frac{\p }{\p t}\int_{\Om}|u^{n}_{\ep}|^{2}dx=-2\ep\int_{\Om}|\n u^{n}_{\ep}|^{2}dx\leq 0,$$
 which implies
     $$\int_{\Om}|u^{n}_{\ep}|^{2}dx(t)=\sum_{1}^{n}\<g^{n}_{i},g^{n}_{i}\>dx(t)\leq \int_{\Om}|u^{n}_{0}|^{2}dx\leq \int_{\Om}|u_{0}|^{2}dx\leq \infty,$$
for any $0<t\leq T_{0}$. Thus, it follows that the above solution $g^{n}$ can be extended as a global one, so as $u^{n}_{\ep}$.

To get the $H^{2}$-energy estimates of $u^{n}_{\ep}$, we choose the test function $v=-\De u^{n}_{\ep}$. A simple computation shows
\begin{equation}\label{ineq}
\begin{aligned}
&\frac{1}{2}\frac{\p }{\p t}\int_{\Om}|\n u^{n}_{\ep}|^{2}dx+\ep\int_{\Om}|\De u^{n}_{\ep}|^{2}dx+\al\int_{\Om}|\J(u^{n}_{\ep})\times \De u^{n}_{\ep}|^{2}dx\\
=&-\al\int_{\Om}\<\J(u^{n}_{\ep})\times \De u^{n}_{\ep},\J(u^{n}_{\ep})\times(\n_u\Phi(\J(u^{n}_{\ep}))+h_{d}(u^{n}_{\ep})+\beta J\cdot\n u^{n}_{\ep})\>dx\\
&-\int_{\Om}\<\J(u^{n}_{\ep})\times\De u^{n}_{\ep},\n_u\Phi(\J(u^{n}_{\ep}))+h_{d}(u^{n}_{\ep})+\beta J\cdot\n u^{n}_{\ep}\>dx,
\end{aligned}
\end{equation}
where we have used the identity $\<Y,X\times Z\>+\<X\times Y,Z\>=0$ for any vector fields $X$, $Y$ and $Z$ in $\Real^{3}$.
By direct calculations, we have
\begin{equation}\label{ineq1}
\begin{aligned}
&\int_{\Om}\<\J(u^{n}_{\ep})\times \De u^{n}_{\ep},\,\J(u^{n}_{\ep})\times\n_u\Phi(\J(u^{n}_{\ep}))\>dx\\
\leq&\de \int_{\Om}|\J(u^{n}_{\ep})\times \De u^{n}_{\ep}|^{2}dx+C(\de)\int_{\Om}|\n_u\Phi(\J(u^{n}_{\ep})|^{2}dx,
\end{aligned}
\end{equation}
\begin{equation}\label{ineq2}
\begin{aligned}
\int_{\Om}\<\J(u^{n}_{\ep})\times \De u^{n}_{\ep},\,\J(u^{n}_{\ep})\times h_{d}(u^{n}_{\ep})\>dx
\leq \de \int_{\Om}|\J(u^{n}_{\ep})\times \De u^{n}_{\ep}|^{2}dx+C(\de)\int_{\Om}|u^{n}_{\ep}|^{2}dx,
\end{aligned}
\end{equation}
\begin{equation}\label{ineq3}
\begin{aligned}
&\int_{\Om}\<\J(u^{n}_{\ep})\times \De u^{n}_{\ep},\,\J(u^{n}_{\ep})\times J\cdot\n u^{n}_{\ep}\>dx\\
\leq&\de \int_{\Om}|\J(u^{n}_{\ep})\times \De u^{n}_{\ep}|^{2}dx+C(\de)|J|^2_{L^{\infty}(\Om)}\int_{\Om}|\n u^{n}_{\ep}|^{2}dx,
\end{aligned}
\end{equation}
\begin{equation}\label{ineq4}
\begin{aligned}
&\int_{\Om}\<\J(u^{n}_{\ep})\times \De u^{n}_{\ep},\n_u\Phi(\J(u^{n}_{\ep}))\>dx\\
\leq&\de \int_{\Om}|\J(u^{n}_{\ep})\times \De u^{n}_{\ep}|^{2}dx+C(\de)\int_{\Om}|\n_u\Phi(\J(u^{n}_{\ep})|^{2}dx,
\end{aligned}
\end{equation}
\begin{equation}\label{ineq5}
\begin{aligned}
\int_{\Om}\<\J(u^{n}_{\ep})\times \De u^{n}_{\ep}, h_{d}(\J(u^{n}_{\ep}))\>dx
\leq\de \int_{\Om}|\J(u^{n}_{\ep})\times \De u^{n}_{\ep}|^{2}dx+C(\de)\int_{\Om}|u^{n}_{\ep}|^{2}dx,
\end{aligned}
\end{equation}
and
\begin{equation}\label{ineq6}
\begin{aligned}
\int_{\Om}\<\J(u^{n}_{\ep})\times \De u^{n}_{\ep},J\cdot\n u^{n}_{\ep}\>dx
\leq\de \int_{\Om}|\J(u^{n}_{\ep})\times \De u^{n}_{\ep}|^{2}dx+C(\de)|J|^2_{L^{\infty}(\Om)}\int_{\Om}|\n u^{n}_{\ep}|^{2}dx.
\end{aligned}
\end{equation}
In the above argument, we have used the estimate of $h_{d}$ in Lemma \ref{es-h_d}. In view of the above inequalities \eqref{ineq1}-\eqref{ineq6}, by choosing $\de=\frac{\al}{12}$, we can deduce from \eqref{ineq} that
\begin{equation}\label{ineq'}
\begin{aligned}
&\frac{1}{2}\frac{\p }{\p t}\int_{\Om}|\n u^{n}_{\ep}|^{2}dx+\ep\int_{\Om}|\De u^{n}_{\ep}|^{2}dx+\frac{\al}{2}\int_{\Om}|\J(u^{n}_{\ep})\times \De u^{n}_{\ep}|^{2}dx\\
\leq&\, C_{1}(\al)(|\n_{u}\Phi|^2_{L^\infty}+1)Vol(\Om)+C_{2}(\al)\beta^2|J|^{2}_{L^{\infty}(\Om)}\int_{\Om}|\n u^{n}_{\ep}|^{2}dx.
\end{aligned}
\end{equation}
Then, the Gronwall inequality implies that, for any $T>0$ there exists a constant $C(\alpha)$ which depends only on $\Phi$ and $Vol(\Om)$, such that the following estimate holds true
\begin{equation}\label{ineq''}
\begin{aligned}
&\sup_{0\leq t\leq T}\int_{\Om}|\n u^{n}_{\ep}|^{2}dx(t)+\ep\int_{0}^{T}\int_{\Om}|\De u^{n}_{\ep}|^{2}dxdt+\al \int_{0}^{T}\int_{\Om}|\J(u^{n}_{\ep})\times \De u^{n}_{\ep}|^{2}dxdt\\
\leq&\, C(\al)T+(C(\al)I(T)+1)\left(\int_{\Om}|\n u_{0}|^{2}dx+C(\al)T\right)\exp{C(\al)I(T)}.
\end{aligned}
\end{equation}
Here
$$I(T)=\beta^{2}\int_{0}^{T}|J|^{2}_{L^{\infty}(\Om)}dt,$$
and the following fact has been used
$$\int_{\Om}|\n u^{n}_{0}|^{2}dx\leq \int_{\Om}|\n u_{0}|^{2}dx,$$
since
$$\int_{\Om}|\n u^{n}_{0}|^{2}dx=-\int_{\Om}\<\De P_{n}(u_{0}),u_{0}\>dx=\int_{\Om}\<\n P_{n}(u_{0}),\n u_{0}\>dx.$$

On the other hand, choosing test function $w=\frac{\p u^{n}_{\ep}}{\p t}$, we know that there holds
\begin{equation}\label{ineqq}
 \begin{aligned}
&\frac{\ep}{2}\frac{\p }{\p t}\int_{\Om}|\n u^{n}_{\ep}|^{2}dx+\int_{\Om}|\frac{\p u^{n}_{\ep}}{\p t}|^{2}dx\\
=&\int_{\Om}\<\al\J(u^{n}_{\ep})\times \frac{\p u^{n}_{\ep}}{\p t}-\frac{\p u^{n}_{\ep}}{\p t},\,\J(u^{n}_{\ep})\times (\De u^{n}_{\ep}+\n_{u}\Phi(\J(u^{n}_{\ep}))+h_d(u^{n}_{\ep})+\beta J\cdot\n u^{n}_{\ep})\>dx.
\end{aligned}
\end{equation}
By taking direct calculations, we obtain
\begin{equation}\label{ineqq1}
\begin{aligned}
&\int_{\Om}\<\al\J(u^{n}_{\ep})\times \frac{\p u^{n}_{\ep}}{\p t}-\frac{\p u^{n}_{\ep}}{\p t},\,\J(u^{n}_{\ep})\times\n_u\Phi(\J(u^{n}_{\ep}))\>dx\\
\leq&\,\de(1+\al^2)\int_{\Om}\left|\frac{\p u^{n}_{\ep}}{\p t}\right|^{2}dx+C(\de)\int_{\Om}|\n_u\Phi(\J(u^{n}_{\ep})|^{2}dx,
\end{aligned}
\end{equation}
\begin{equation}\label{ineqq2}
\begin{aligned}
&\int_{\Om}\<\al\J(u^{n}_{\ep})\times \frac{\p u^{n}_{\ep}}{\p t}-\frac{\p u^{n}_{\ep}}{\p t},\J(u^{n}_{\ep})\times h_{d}(u^{n}_{\ep})\>dx\\
\leq&\, \de(1+\al^2)\int_{\Om}|\frac{\p u^{n}_{\ep}}{\p t}|^{2}dx+C(\de)\int_{\Om}|u^{n}_{\ep}|^{2}dx,
\end{aligned}
\end{equation}
\begin{equation}\label{ineqq3}
\begin{aligned}
&\int_{\Om}\<\al\J(u^{n}_{\ep})\times \frac{\p u^{n}_{\ep}}{\p t}-\frac{\p u^{n}_{\ep}}{\p t},\J(u^{n}_{\ep})\times J\cdot\n u^{n}_{\ep}\>dx\\
\leq &\, \de(1+\al^2)\int_{\Om}|\frac{\p u^{n}_{\ep}}{\p t}|^{2}dx+C(\de)|J|^2_{L^{\infty}(\Om)}\int_{\Om}|\n u^{n}_{\ep}|^{2}dx,
\end{aligned}
\end{equation}
and
\begin{equation}\label{ineqq4}
\begin{aligned}
&\int_{\Om}\<\al\J(u^{n}_{\ep})\times \frac{\p u^{n}_{\ep}}{\p t}-\frac{\p u^{n}_{\ep}}{\p t},\J(u^{n}_{\ep})\times \De u^{n}_{\ep}\>dx\\
\leq\,& \de(1+\al^2)\int_{\Om}|\frac{\p u^{n}_{\ep}}{\p t}|^{2}dx+C(\de)\int_{\Om}|\J(u^{n}_{\ep})\times \De u^{n}_{\ep}|^{2}dx.
\end{aligned}
\end{equation}
Here we have used the fact
$$\left|\al\J(u^{n}_{\ep})\times \frac{\p u^{n}_{\ep}}{\p t}-\frac{\p u^{n}_{\ep}}{\p t}\right|^{2}\leq (1+\al^{2})\left|\frac{\p u^{n}_{\ep}}{\p t}\right|^{2},$$
since the above two vectors are orthonormal in $\Real^{3}$. Combining the above inequalities \eqref{ineqq1}-\eqref{ineqq4} with \eqref{ineqq} and choosing $\de=\frac{1}{8(1+\al^2)}$ lead to
\begin{equation}\label{ineqq'}
\begin{aligned}
&\frac{\ep}{2}\frac{\p }{\p t}\int_{\Om}|\n u^{n}_{\ep}|^{2}dx+\frac{1}{2}\int_{\Om}|\frac{\p u^{n}_{\ep}}{\p t}|^{2}dx\\
\leq&\, C(\al)+C(\al)\beta|J|^{2}_{L^{\infty}(\Om)}\int_{\Om}|\n u^{n}_{\ep}|^{2}dx+C(\al)\int_{\Om}|\J(u^{n}_{\ep})\times \De u^{n}_{\ep}|^{2}dx.
\end{aligned}
\end{equation}
Hence, in view of \eqref{ineq''}, it follows from the Gronwall inequality that
\begin{equation}\label{ineqq''}
\begin{aligned}
&\ep\sup_{0\leq t\leq T}\int_{\Om}|\n u^{n}_{\ep}|^{2}dx+\int_{0}^{T}\int_{\Om}|\frac{\p u^{n}_{\ep}}{\p t}|^{2}dxdt\\
\leq\,& C(\al)T+(C(\al)I(T)+1)(\int_{\Om}|\n u_{0}|^{2}dx+C(\al)T)\exp{C(\al)I(T)}.
\end{aligned}
\end{equation}
Therefore, we add the inequalities \eqref{ineq''} to \eqref{ineqq''}, then the desired estimates of approximated solution $u^{n}_\ep$ are obtained. Hence, we conclude
\begin{lem}\label{u-es}
Let $u_{0}\in H^{1}(\Om,\U^{2})$. For any $n\in\mathbb{N}$ and $T>0$, there exists a solution $u^{n}_{\ep}\in L^{\infty}([0,T],H^{1}(\Om,\Real^{3}))\cap W^{2,1}_{2}(\Om\times[0,T],\Real^{3})$ to \eqref{app-cur1}. Moreover, there exists a constant $C(\al)$ independent on $u^{n}_\ep$, such that the following a prior estimate holds.
\begin{align*}
&(1+\ep)\sup_{0\leq t\leq T}\norm{u^n_\ep}^2_{H^1(\Om)}+\ep\int_{0}^{T}\int_{\Om}|\De u^{n}_{\ep}|^{2}dxdt+\int_{0}^{T}\int_{\Om}|\frac{\p u^{n}_{\ep}}{\p t}|^{2}dxdt\\
&+\al \int_{0}^{T}\int_{\Om}|\J(u^{n}_{\ep})\times \De u^{n}_{\ep}|^{2}dxdt\leq C(\al)T+(C(\al)I(T)+1)(\norm{u_0}^2_{H^1(\Om)}+C(\al)T)\exp{I(T)}.
\end{align*}
\end{lem}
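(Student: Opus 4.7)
The plan is to implement the Galerkin scheme set up immediately above. Since $\mathfrak{J}(u)=u/\max\{1,|u|\}$ is globally Lipschitz, $\Phi$ is $C^1$ on $\bar B(1)$, and $h_d$ is a bounded linear operator on $L^2$ by Lemma \ref{es-h_d}, the projected right-hand side $F(g^n)$ of the ODE system is locally Lipschitz in $g^n$. Picard-Lindel\"of therefore produces a local solution, which determines $u^n_\ep=\sum g^n_i f_i\in H_n$. Pairing \eqref{app-cur1} with $u^n_\ep$ itself makes the cross-product terms vanish and yields $\frac{d}{dt}\|u^n_\ep\|_{L^2}^2=-2\ep\|\n u^n_\ep\|_{L^2}^2\leq 0$, so $\|u^n_\ep(t)\|_{L^2}\leq\|u_0\|_{L^2}$ on the maximal interval, which forces the ODE solution to extend to $[0,T]$.

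For the higher-order bound I would follow the two-test-function route already sketched before the lemma statement. Testing \eqref{app-cur1} with $v=-\De u^n_\ep$ and using $\<Y,X\times Z\>+\<X\times Y,Z\>=0$ produces the identity \eqref{ineq}. Each of the six cross terms on the right is then controlled by Young's inequality with a small parameter $\de$ as in \eqref{ineq1}-\eqref{ineq6}, so that a fraction of the dissipation $\al\|\mathfrak{J}(u^n_\ep)\times\De u^n_\ep\|_{L^2}^2$ is absorbed into the left-hand side; Lemma \ref{es-h_d} turns $\|h_d(u^n_\ep)\|_{L^2}$ into $\|u^n_\ep\|_{L^2}$, and the $L^\infty$-norm of $J$ controls the drift. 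With $\de=\al/12$ this yields \eqref{ineq'}, and a Gronwall step in $t$ produces \eqref{ineq''}.

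Next, I would test with $w=\p_t u^n_\ep$. The pointwise identity $|\al\mathfrak{J}(u^n_\ep)\times\p_t u^n_\ep-\p_t u^n_\ep|^2\leq(1+\al^2)|\p_t u^n_\ep|^2$ (orthogonality in $\Real^3$) lets Young's inequality absorb a piece of $\|\p_t u^n_\ep\|_{L^2}^2$ into the left-hand side, with the remaining $\|\mathfrak{J}(u^n_\ep)\times\De u^n_\ep\|_{L^2}^2$ being integrated in time and controlled not by absorption but by the bound already obtained in \eqref{ineq''}. Choosing $\de=1/(8(1+\al^2))$ and applying Gronwall once more gives \eqref{ineqq''}, and summing with \eqref{ineq''} yields the stated estimate. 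The passage from $\|u^n_0\|_{H^1}$ to $\|u_0\|_{H^1}$ is legitimate because $\{f_i\}$ are eigenfunctions of $\De-I$ with Neumann boundary, so $P_n$ is simultaneously orthogonal for the $L^2$ and $H^1$ inner products (on the natural domain), giving $\|\n P_n u_0\|_{L^2}\leq\|\n u_0\|_{L^2}$.

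The main obstacle is the coupling between the two estimates: the second one cannot absorb $\|\mathfrak{J}\times\De\|_{L^2}^2$ and must instead import it from the first, so the Gronwall steps must be ordered. A secondary point is that all constants must be independent of both $n$ and $\ep$, which requires tracking that the Young parameters $\de$ are chosen in terms of $\al$ only and that $h_d$ is estimated via Lemma \ref{es-h_d} rather than via any $n$-dependent bound; this uniformity is what will allow the passage to the limit in both $n$ and $\ep$ in the next section.
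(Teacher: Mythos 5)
Your proposal reproduces the paper's own argument essentially verbatim: the same Galerkin/Picard--Lindel\"of local existence, the same $L^2$ test for global extension, the same two test functions $-\De u^n_\ep$ and $\p_t u^n_\ep$ with the same Young parameters $\de=\al/12$ and $\de=1/(8(1+\al^2))$, the same ordering of the two Gronwall steps (importing the $\|\J(u^n_\ep)\times\De u^n_\ep\|_{L^2}^2$ bound from the first estimate into the second), and the same projection identity for $\|\n u^n_0\|_{L^2}\le\|\n u_0\|_{L^2}$. The approach and all key steps coincide with the paper's proof.
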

\subsection{Compactness of the approximated solutions and the limiting map}
In this subsection, we consider the compactness of the approximation solution $u^{n}_{\ep}$ to \eqref{app-cur1} constructed in the above. The main tool to achieve the compactness is the well-known Alaoglu' theorem and the Aubin-Simon compactness theorem (see Lemma \ref{A-S} in Section $2$). Thus, the Lemma \ref{u-es} and Lemma \ref{eq-norm} imply that there exists a subsequence of $\{u^{n}_{\ep}\}$, we still denote it by $\{u^{n}_{\ep}\}$, and a $u_{\ep} \in L^{\infty}([0,T],H^{1}(\Om,\Real^{3}))\cap W^{2,1}_{2}(\Om\times[0,T],\Real^{3})$ such that
$$u^{n}_\ep\rightharpoonup u_\ep,\,\,\text{weakly* in}\, L^{\infty}([0,T],H^{1}(\Om,\Real^{3})),$$
$$u^{n}_\ep\rightharpoonup u_\ep,\,\,\,\text{weakly in}\,W^{2,1}_{2}(\Om\times[0,T],\Real^{3}).$$

Next, let $X=H^{2}(\Om,\Real^{3})$, $B=H^{1}(\Om,\Real^{3})$ and $Y=L^{2}(\Om,\Real^{3})$. Then, Lemma \ref{A-S} tells us
$$u^{n}_{\ep}\rightarrow u_\ep,\,\,\,\text{strongly in}\, L^{p}([0,T], H^{1}(\Om,\Real^{3})),$$
for any $p<\infty$. It follows that $u_{\ep}$ is a strong solution of the equation \eqref{app-cur}.
\begin{lem}\label{sepshmp}
For any fixed $\ep>0$, the limiting map $u_{\ep}$ of $\{u^{n}_{\ep}\}$ is a $W^{2,1}_2$-solution of \eqref{app-cur}. Namely, for any $\varphi\in C^{\infty}(\bar{\Om}\times [0,T])$, there holds
\begin{align*}
&\int_{0}^{T}\int_{\Om}\<\frac{\p u_{\ep}}{\p t},\varphi\>dxdt=\ep\int_{0}^{T}\int_{\Om}\<\De u_{\ep},\varphi\>dxdt\\
&+\al\int_{0}^{T}\int_{\Om}\<\J(u_{\ep})\times h_\beta,\J(u_{\ep})\times\varphi\>dxdt-\int_{0}^{T}\int_{\Om}\<\J(u_{\ep})\times h_\beta,\varphi\>dxdt,	
\end{align*}
where $h_{\beta}(u_{\ep})=\De u_\ep-\n_u\Phi(\J(u_{\ep}))+h_d(u_\ep)+\beta J\cdot\n u_\ep$.
\end{lem}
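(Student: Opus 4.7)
The plan is to pass to the limit $n\to\infty$ in the weak form of the Galerkin equation \eqref{app-cur1}, using the convergences supplied by Lemma \ref{u-es} together with the Aubin-Simon Lemma \ref{A-S}. For any $\varphi\in C^{\infty}(\bar{\Om}\times[0,T])$, I would first test \eqref{app-cur1} against $P_{n}\varphi$; since $\p_{t}u^{n}_{\ep}$ and $\De u^{n}_{\ep}$ both lie in $H_{n}$ (because $H_{n}$ is the span of Neumann eigenfunctions of $\De-I$), and the nonlinear piece is already projected by $P_{n}$, the self-adjointness of $P_{n}$ on $L^{2}$ converts the equation into an identity of exactly the desired shape but with $u^{n}_{\ep}$, $h_{\beta}(u^{n}_{\ep})$ and $P_{n}\varphi$ in place of $u_{\ep}$, $h_{\beta}(u_{\ep})$ and $\varphi$. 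In this step the vector identity $\<A\times B,C\>=-\<B, A\times C\>$ is used to rewrite $-\<\J(u^{n}_{\ep})\times(\J(u^{n}_{\ep})\times h_{\beta}), P_{n}\varphi\>$ as $\<\J(u^{n}_{\ep})\times h_{\beta}, \J(u^{n}_{\ep})\times P_{n}\varphi\>$.

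Next, I would extract a single subsequence along which all of the following hold simultaneously: $u^{n}_{\ep}\to u_{\ep}$ strongly in $L^{p}([0,T],H^{1}(\Om))$ for every $p<\infty$ by Lemma \ref{A-S}; $\De u^{n}_{\ep}\rightharpoonup \De u_{\ep}$ and $\p_{t}u^{n}_{\ep}\rightharpoonup \p_{t}u_{\ep}$ weakly in $L^{2}(\Om\times[0,T])$ by Lemma \ref{u-es}; $u^{n}_{\ep}$, $\n u^{n}_{\ep}$ converge pointwise a.e., so that by continuity $\J(u^{n}_{\ep})\to \J(u_{\ep})$ and $\n_{u}\Phi(\J(u^{n}_{\ep}))\to \n_{u}\Phi(\J(u_{\ep}))$ a.e. with the uniform bound $|\J(u^{n}_{\ep})|\le 1$; strong $L^{2}$-convergence $h_{d}(u^{n}_{\ep})\to h_{d}(u_{\ep})$ by Lemma \ref{es-h_d}; and finally $P_{n}\varphi\to \varphi$ smoothly on $\bar{\Om}\times[0,T]$.

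I would then pass to the limit term by term. The two linear pieces $\int_{0}^{T}\!\!\int_{\Om}\<\p_{t}u^{n}_{\ep},\varphi\>$ and $\ep\int_{0}^{T}\!\!\int_{\Om}\<\De u^{n}_{\ep}, P_{n}\varphi\>$ pass by weak-$L^{2}$ convergence paired against strongly convergent test functions. Decomposing $h_{\beta}(u^{n}_{\ep})=\De u^{n}_{\ep}-\n_{u}\Phi(\J(u^{n}_{\ep}))+h_{d}(u^{n}_{\ep})+\beta J\cdot\n u^{n}_{\ep}$, the three subterms that do not involve $\De u^{n}_{\ep}$ converge strongly in $L^{2}(\Om\times[0,T])$, and when paired against the bounded, a.e.-convergent factors $\J(u^{n}_{\ep})\times(\J(u^{n}_{\ep})\times P_{n}\varphi)$ or $\J(u^{n}_{\ep})\times P_{n}\varphi$, Lebesgue's dominated convergence theorem delivers the desired limits.

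The main obstacle is the contribution of $\De u^{n}_{\ep}$, which converges only weakly in $L^{2}$, so it must be paired against a strongly convergent factor. I would handle this by reapplying the identity $\<A\times B,C\>=-\<B, A\times C\>$ to shift both copies of $\J(u^{n}_{\ep})$ onto the test function, producing integrands of the form $-\<\De u^{n}_{\ep}, \J(u^{n}_{\ep})\times(\J(u^{n}_{\ep})\times P_{n}\varphi)\>$ and $-\<\De u^{n}_{\ep}, \J(u^{n}_{\ep})\times P_{n}\varphi\>$. Because $\J(u^{n}_{\ep})$ is uniformly bounded by $1$ and converges a.e. to $\J(u_{\ep})$, while $P_{n}\varphi\to\varphi$ smoothly, these test factors converge strongly in $L^{2}(\Om\times[0,T])$ by dominated convergence, and the weak-strong pairing with $\De u^{n}_{\ep}\rightharpoonup \De u_{\ep}$ concludes the argument. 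Collecting all limits and reversing the vector identity once more restores the weak identity for $u_{\ep}$ in the statement of the lemma.
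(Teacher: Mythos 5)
Your proposal is correct and follows essentially the same route as the paper: test the Galerkin equation against projected test functions, use the energy bounds of Lemma \ref{u-es} with the Aubin--Simon lemma to extract a.e.\ and strong/weak convergences, and pass to the limit by pairing the weakly convergent $\De u^{n}_{\ep}$ against strongly convergent factors (your shifting of $\J(u^{n}_{\ep})$ onto the test function is precisely the justification of the weak convergence $\J(u^{n}_{\ep})\times\De u^{n}_{\ep}\rightharpoonup\J(u_{\ep})\times\De u_{\ep}$ that the paper asserts directly). The only item you omit is the verification that the Neumann condition $\frac{\p u_{\ep}}{\p\nu}=0$ survives the limit, which the paper checks by integrating $\<\De u^{n}_{\ep},\xi\>$ by parts and letting $n\to\infty$; this should be added since the lemma claims $u_{\ep}$ solves \eqref{app-cur} including its boundary condition.
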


\begin{proof}
For any $\varphi\in C^{\infty}(\bar{\Om}\times [0,T])$ and $k\in \mathbb{N}$, we set $\varphi_{k}=P_{k}(\varphi)=\sum_{i=1}^{k}g_{i}(t)f_{i}$, where $g_{i}(t)=\<\varphi,f_{i}\>_{L^{2}(\Om)}$. Thus, $\norm{\varphi-\varphi_{k}}_{L^{\infty}([0,T],L^{2}(\Om))}\to 0$ as $k\to \infty$.

We firstly fix $k$ and let $n\geq k$. Since $u^{n}_{\ep}$ is a strong solution of \eqref{app-cur1}, it follows
\begin{align*}
&\int_{0}^{T}\int_{\Om}\<\frac{\p u^{n}_{\ep}}{\p t},\varphi_k\>dxdt=\ep\int_{0}^{T}\int_{\Om}\<\De u^n_{\ep},\varphi_k\>dxdt+\\
&\al\int_{0}^{T}\int_{\Om}\<\J(u^n_{\ep})\times h_\beta(u^{n}_\ep),\J(u^{n}_{\ep})\times\varphi_k\>dxdt-\int_{0}^{T}\int_{\Om}\<\J(u^n_{\ep})\times h_\beta(u^n_\ep),\varphi_k\>dxdt.	 
\end{align*}
The above conclusions on compactness imply
$$\frac{\p u^{n}_{\ep}}{\p t}\rightharpoonup \frac{\p u_{\ep}}{\p t}\quad\text{weakly in}\, L^{2}([0,T],L^{2}(\Om,\Real^{3})),$$
$$\De u^{n}_{\ep}\rightharpoonup\De u_{\ep}\quad\text{weakly in}\,\, L^{2}([0,T],L^{2}(\Om,\Real^{3})),$$
and
$$u^{n}_{\ep}\rightarrow u\quad\text{strongly in}\,\, L^{p}([0,T], H^{1}(\Om,\Real^{3})).$$
It follows
$$h_d(u^{n}_{\ep})\rightarrow h_d(u_{\ep})\quad\text{strongly in}\,\, L^{p}([0,T], H^{1}(\Om,\Real^{3})),$$
$$\J(u^{n}_{\ep})\rightarrow \J(u_{\ep})\quad\text{pointwise a.e}\,\, (x,t)\in \Om\times[0,T],$$
$$\J(u^{n}_{\ep})\times\De u^{n}_{\ep}\rightharpoonup\J(u_{\ep}) \times\De u_{\ep}\quad \text{weakly in} \,\,L^{2}([0,T],L^{2}(\Om)),$$
and
$$\J(u^{n}_{\ep})\times J\cdot \n u^n_\ep \to \J(u_{\ep}) \times J\cdot \n u_{\ep}\quad \text{in} \,\,L^{1}([0,T],L^{2}(\Om)).$$
Here Lemma \ref{es-h_d} and $J\in L^{2}([0,T],L^{\infty}(\Om))$ have been used.

Therefore, using the dominated convergence theorem and the definition of weak convergence, we infer the desired conclusions as $n\to \infty$ and then $k\to \infty$. It remains that we need to check the Neumann boundary condition. Since for any $\xi\in C^{\infty}(\bar{\Om}\times[0,T])$, there holds
 $$\int_{0}^{T}\int_{\Om}\<\De u^{n}_{\ep},\xi\>dxdt=-\int_{0}^{T}\int_{\Om}\<\n u^{n}_{\ep},\n \xi\>dxdt.$$
Let $n\to \infty$, we have
 $$\int_{0}^{T}\int_{\Om}\<\De u_{\ep},\xi\>dxdt=-\int_{0}^{T}\int_{\Om}\<\n u_{\ep},\n \xi\>dxdt,$$
 that is $\frac{\p u_{\ep}}{\p \nu}|_{\p \Om\times [0,T]}=0$.
\end{proof}
By a similar argument with that in \cite{W, JW1, JW2}, we can also establish the following
\begin{lem}\label{mpl}
Let $w:\Om\times[0,T]\to \Real^{3}$ be a solution of the following equation belonging to $W^{1,1}_{2}([0,T]\times \Om; \Real^{3})\cap L^{\infty}([0,T],H^{1}(\Om))$
\begin{equation}
\begin{cases}
\displaystyle\frac{\p w}{\p t}-\ep \De w=\J(w)\times F(u,x,t),&(x,t)\in\Om\times (0,T),\\
w(x,0)=w_{0}(x):\Om\to \U^{2},\\
\displaystyle\frac{\p w}{\p \nu}=0,&(x,t)\in\p\Om\times[0,T],
\end{cases}
\end{equation}
where $F(u,x,t)\in L^{2}(\Om\times [0,T],\Real^{3})$. Then, $|w(x,t)|\leq 1$ for a.e $(x,t)\in \Om\times[0,T]$.
\end{lem}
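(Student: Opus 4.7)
The plan is to prove the pointwise bound $|w| \le 1$ by a Stampacchia-type maximum principle, exploiting the fact that the nonlinear term $\mathfrak{J}(w)\times F$ is pointwise orthogonal to $w$.

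First, I would derive a parabolic inequality for the scalar function $|w|^{2}$. Since $w = \max\{1,|w|\}\,\mathfrak{J}(w)$, the vector $w$ is parallel to $\mathfrak{J}(w)$, and hence $\langle w,\mathfrak{J}(w)\times F\rangle=0$ pointwise. Taking the inner product of the equation with $w$ and using $\Delta|w|^{2}=2\langle w,\Delta w\rangle+2|\nabla w|^{2}$ then gives
\begin{equation*}
\partial_{t}|w|^{2}-\varepsilon\Delta|w|^{2}=-2\varepsilon|\nabla w|^{2}\le 0,
\end{equation*}
which holds in $L^{1}(\Omega\times[0,T])$ by the regularity $w\in W^{1,1}_{2}\cap L^{\infty}([0,T],H^{1}(\Omega))$.

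Next, I would test this identity against the truncation $\phi(x,t):=(|w(x,t)|^{2}-1)_{+}$, which belongs to $L^{\infty}([0,T],H^{1}(\Omega))$ because $w$ does. Using the chain rule one has $\partial_{t}\phi^{2}=2\phi\,\partial_{t}|w|^{2}$ a.e., and integration by parts in space combined with the Neumann boundary condition (which implies $\partial_{\nu}|w|^{2}=2\langle w,\partial_{\nu}w\rangle=0$ on $\partial\Omega$) gives
\begin{equation*}
\tfrac{1}{2}\tfrac{d}{dt}\int_{\Omega}\phi^{2}\,dx+\varepsilon\int_{\{|w|>1\}}\bigl|\nabla|w|^{2}\bigr|^{2}\,dx\le -2\varepsilon\int_{\Omega}\phi\,|\nabla w|^{2}\,dx\le 0.
\end{equation*}
Since $|w_{0}|=1$ gives $\phi(\cdot,0)\equiv 0$, integrating in time forces $\phi\equiv 0$, i.e.\ $|w|\le 1$ almost everywhere.

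The main technical obstacle is justifying the chain rule and the integration by parts given only $W^{1,1}_{2}$ spatial/temporal regularity. The cleanest route is to approximate $(\cdot-1)_{+}$ by a $C^{1}$ convex function $G_{\eta}$ (with $G_{\eta}\to(\cdot-1)_{+}^{2}/2$ as $\eta\to 0$, $G_{\eta}'\ge 0$, and $G_{\eta}'(s)=0$ for $s\le 1$), apply the identity to the smooth functional $\int_{\Omega}G_{\eta}(|w|^{2})\,dx$, where the time derivative is absolutely continuous by standard $W^{1,1}_{2}$ theory, then pass $\eta\to 0$ by dominated convergence. The orthogonality $\langle w,\mathfrak{J}(w)\times F\rangle=0$ is the only structural input, and the cited references \cite{W,JW1,JW2} handle essentially the same computation, so I would invoke that framework to keep the argument short.
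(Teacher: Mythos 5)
Your structural idea coincides with the paper's: since $\J(w)$ is parallel to $w$, the forcing term $\J(w)\times F$ is pointwise orthogonal to $w$, so pairing the equation with a test field parallel to $w$ eliminates it, and a truncation argument then propagates $|w_0|=1$ forward in time. The paper implements this by testing with $(|w|-1)^{+}\frac{w}{|w|}$ and with $\frac{(|w|-1)^{+}}{|w|-1+\delta}\frac{w}{|w|}$, combining the two identities, and showing that $q(t)=\int_{\{|w|>1\}}|w|^{2}\bigl(1-\tfrac{1}{|w|}\bigr)\,dx$ is nonincreasing from $q(0)=0$.

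There is, however, a genuine gap in your execution. The assertion that $\phi=(|w|^{2}-1)_{+}$ belongs to $L^{\infty}([0,T],H^{1}(\Omega))$ ``because $w$ does'' is false: $\nabla\phi=2\,\mathbf{1}_{\{|w|>1\}}\langle w,\nabla w\rangle$ is controlled only by $|w|\,|\nabla w|$, which for $w\in L^{\infty}([0,T],H^{1})$ lies merely in $L^{3/2}$ in dimension three, and is worse in the higher dimensions where the lemma is reused in Section 4. Consequently none of the quantities your energy identity requires is known to be finite a priori: $\int_{\Omega}\phi^{2}\,dx=\int_{\Omega}(|w|^{2}-1)_{+}^{2}\,dx$ needs $w\in L^{4}$ (which fails for $H^{1}$ maps when $m\geq 5$ --- precisely the obstruction the paper's method is designed to avoid), while $\int_{\{|w|>1\}}\bigl|\nabla|w|^{2}\bigr|^{2}\,dx$ and $\int_{\Omega}\phi\,|\nabla w|^{2}\,dx$ both need $w\nabla w\in L^{2}$. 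Your proposed remedy --- smoothing the corner of $(\cdot-1)_{+}$ by convex $C^{1}$ approximations $G_{\eta}$ --- does not help, because the difficulty sits at large values of $|w|$, not at the kink $|w|=1$. A correct repair is either to truncate at a finite level (test with $\min\{(|w|^{2}-1)_{+},K\}$, obtain monotonicity of $\int\Phi_{K}(|w|^{2})\,dx$ for each $K$, then let $K\to\infty$), or, as the paper does, to choose test fields that are pointwise bounded by $|w|$ with gradients bounded by $C|\nabla w|$; this is exactly what $(|w|-1)^{+}\frac{w}{|w|}$ achieves, and it is why the paper's monotone quantity is $\int(|w|^{2}-|w|)_{+}\,dx$, finite for $w\in L^{2}$, rather than your $\int(|w|^{2}-1)_{+}^{2}\,dx$.
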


\begin{proof}
We firstly choose a test function $f=(|w|-1)^{+}\frac{w}{|w|}$. Then, there holds
$$\int_{\Om}\<\frac{\p w}{\p t},f\>dx=-\ep\int_{\Om}\<\n w,\n f\>dx.$$
Then, by a simple computation we can see from the above identity

\begin{equation}\label{Key1'}
\begin{aligned}
&\frac{1}{2}\frac{d}{dt}\int_{|w|> 1}|w|^2\(1-\frac{1}{|w|}\)\,dx+\ep\int_{|w|>1}\frac{|\<\n w,w\>|^2}{|w|^3}\,dx\\
&=\frac{1}{2}\int_{|w|>1}\frac{\<w, w_t\>}{|w|}\,dx-\ep\int_{|w|>1}|\n w|^2\left(1-\frac{1}{|w|}\right)\,dx.
\end{aligned}
\end{equation}
Taking $$g=\frac{(|w|-1)^+}{(|w|-1+\de)}\frac{w}{|w|}$$ as another test function, we get:
\[
\aligned
&\int_{|w|>1}\frac{\<w, w_t\>}{|w|} \frac{|w|-1}{|w|-1+\de}\,dx=-\ep\int_{\Om}\n w\cdot\n\Big[ \frac{w(|w|-1)^+}{|w|(|w|-1+\de)} \Big]\,dx\\
=&-\ep\int_{|w|>1}|\n w|^2\frac{|w|-1}{|w|(|w|-1+\delta)}\,dx\\
&+\ep\int_{|w|>1}\frac{|\<\n w, w\>|^2}{|w|^3}\frac{|w|-1}{|w|+\de-1}\,dx-\ep\int_{|w|>1}\frac{|\<\n w, w\>|^2}{|w|^2}\frac{\de}{(|w|+\de-1)^2}\,dx\\
\leq& -\ep\int_{|w|>1}|\n w|^2\frac{|w|-1}{|w|(|w|-1+\delta)}\,dx+\ep\int_{|w|>1}\frac{|\<\n w, w\>|^2}{|w|^3}\frac{|w|-1}{|w|+\de-1}\,dx.
\endaligned
\]
By the dominated convergence theorem, letting $\de \to0$ we derive from the above
\begin{equation}\label{Key2'}
\int_{|w|>1}\frac{\<w, w_t\>}{|w|}\,dx\leq-\ep\int_{|w|>1}\frac{|\n w|^2}{|w|}\,dx+\ep\int_{|w|>1}\frac{|\<\n w, w\>|^2}{|w|^3}\,dx.
\end{equation}
Combining (\ref{Key1'}) and (\ref{Key2'}) yields
\[
\frac{d}{dt}\int_{|w|>1}|w|^2\Big(1-\frac{1}{|w|}\Big)\,dx\leq 0.
\]
This means that the following function
\[
q(t):=\int_{|w|>1}|w|^2\Big(1-\frac{1}{|w|}\Big)\,dx
\]
is decreasing non-negative function. Noting $|w_0|=1$, i.e. $q(0)=0$, we can see that $q(t)\equiv0$ for any $t>0$. Therefore, we have $|w|\leq1$.
\end{proof}
Using this lemma, we have $|u^{\ep}|\leq 1$. It follows that $\J(u_{\ep})=u_{\ep}$. Thus, $u_{\ep}$ is a $W^{2,1}_{2}$- solution of the following equation
\begin{equation}\label{ep-shmp1}
\begin{cases}
\displaystyle\frac{\p u_{\ep}}{\p t}=\ep \De u_{\ep}-\al u_{\ep}\times(u_{\ep}\times h_\beta(u_{\ep}))-\al u_{\ep}\times h_{\beta}(u_\ep),&(x,t)\in\Om\times\Real^{+},\\
u(x,0)=u_{0}(x), &x\in\Om,\\
\displaystyle\frac{\p u_{\ep}}{\p \nu}=0,&(x,t)\in\p\Om\times\Real^{+}.
\end{cases}
\end{equation}

\subsection{Solutions to Landau-Lifschitz equation with spin-polarized current}\

In the last part of this section, we are in position to prove Theorem \ref{thm1}. Let $u_{\ep}$ be the solution of equation \eqref{app-cur} for $\ep>0$, constructed in the above. We will show that there exists a subsequence of $\{u_{\ep}\}$, and still denote it by $\{u_{\ep}\}$, converges to a map $u\in L^{\infty}([0,T],H^{1}(\Om,\U^{2}))\cap W^{1,1}_{2}(\Om\times[0,T],\U^{2})$, which is a weak solution of \eqref{cur1}. Moreover, there is a constant $C(\al)$, such that there holds true
\begin{align*}
\sup_{0\leq t\leq T}\norm{u}^2_{H^1(\Om)}+\int_{0}^{T}\int_{\Om}|\frac{\p u}{\p t}|^{2}dxdt\leq C(\al)T+(C(\al)I(T)+1)(\norm{u_0}^2_{H^1(\Om)}+C(\al)T)\exp{I(T)}.
\end{align*}
\medskip

\begin{proof}[\textbf{The proof of Theorem \ref{thm1}}]

We divide the proof into two steps.\\

\noindent\emph{Step 1: The compactness of $\{u_{\ep}\}$ and the limiting map.}
\medskip

From the above arguments, we know that, for any $T>0$, $u_{\ep} \in L^{\infty}([0,T],H^{1}(\Om,\Real^{3}))\cap W^{2,1}_{2}(\Om\times[0,T],\Real^{3})$. Furthermore, the estimates of $u^{n}_{\ep}$ in Lemma \ref{u-es} and lower semi-continuity imply that $u_{\ep}$ satisfies the following uniform estimate
 \begin{align*}
 &(1+\ep)\sup_{[0,T]}\norm{u_{\ep}}^{2}_{H^1(\Om)}+\al \int_{0}^{T}\int_{\Om}|u_{\ep}\times \De u_{\ep}|^{2}dx+\int_{0}^{T}\int_{\Om}|\frac{\p  u_{\ep}}{\p t}|^{2}dx+\ep\int_{0}^{T}\int_{\Om}|\n^{2} u_{\ep}|^{2}dx\\
\leq &C(\al)T+(C(\al)I(T)+1)(\norm{u_0}^2_{H^1(\Om)}+C(\al)T)\exp{I(T)}.
 \end{align*}
Then, there exists a $u\in L^{\infty}([0,T],H^{1}(\Om,\Real^{3}))\cap W^{1,1}_{2}(\Om\times[0,T],\Real^{3})$ such that
$$u_{\ep}\rightharpoonup u,\quad\text{weakly* in}\, L^{\infty}([0,T],H^{1}(\Om,\Real^{3}))$$
and
$$u_{\ep}\rightharpoonup u,\quad\text{weakly in}\,W^{1,1}_{2}(\Om\times[0,T],\Real^{3})$$
as $\ep\to 0$.

By letting $X=H^{1}(\Om,\Real^{3})$ and $B=Y=L^{2}(\Om,\Real^{3})$ in Lemma \ref{A-S}, we have
$$u_{\ep}\rightarrow u,\quad\text{strongly in}\, L^{p}([0,T], L^{2}(\Om,\Real^{3})).$$
for any $1\leq p<\infty$.
Moreover, we have
$$h_d(u_{\ep})\rightarrow h_d(u),\quad\text{strongly in}\, L^{p}([0,T], L^{2}(\Om,\Real^{3}))$$
and
$$u_{\ep}\rightarrow u,\quad\text{point-wise a.e}\,(x,t)\in\Om\times[0,T].$$
The above convergence implies
$$u_{\ep}\times h_d(u_\ep)\to u\times h_d(u),\quad\text{in}\,L^{2}(\Om\times[0,T],\Real^{3}),$$
$$u_{\ep}\times \n u_{\ep}\rightharpoonup u\times \n u,\quad\text{weakly in}\,L^{2}(\Om\times[0,T],\Real^{3}),$$
$$u_{\ep}\times \n_u\Phi(u_\ep)\rightharpoonup u\times \n_u\Phi(u),\quad\text{weakly in}\,L^{2}(\Om\times[0,T],\Real^{3}),$$
and
$$u_{\ep}\times J\cdot\n u_\ep \to u\times J\cdot\n u,\quad\text{in}\,L^{1}(\Om\times[0,T],\Real^{3}).$$
Here we have used $|u_\ep|\leq 1$ and $J\in L^{2}( [0,T],L^\infty(\Om))$.

By a similar argument with that in Lemma \ref{sepshmp}, we can show
$$\frac{\p u}{\p \nu}|_{\p\Om\times [0,T]}=0.$$
Let $u_{\ep}$ be a test function for \eqref{app-cur}, we have
$$\int_{\Om}|u_{\ep}|^{2}dx+2\ep\int_{0}^{T}\int_{\Om}|\n u_{\ep}|^{2}dxdt=Vol(\Om),$$
As $\ep\to 0$, there holds
$$\int_{\Om}(|u|^{2}-1)dx=0,$$
it implies $|u|=1$, since $|u|\leq 1$.
\medskip

\noindent\emph{Step 2: Weak solutions of \eqref{cur1}.}
\medskip

Let $\varphi \in C^{\infty}(\bar{\Om}\times[0,T])$. Then Lemma \ref{sepshmp} tells us that
\begin{align*}
&\int_{0}^{T}\int_{\Om}\<\frac{\p u_{\ep}}{\p t},\varphi\>dxdt=-\ep\int_{0}^{T}\int_{\Om}\<\De u_{\ep},\varphi\>dxdt\\
&+\al\int_{0}^{T}\int_{\Om}\<u_{\ep}\times h_\beta(u_\ep) ,u_{\ep}\times\varphi\>dxdt-\int_{0}^{T}\int_{\Om}\<u_{\ep}\times h_\beta(u_\ep) ,\varphi\>dxdt.
\end{align*}
By the convergence obtained in $Step\,1$, we have
$$LHS\to \int_{0}^{T}\int_{\Om}\<\frac{\p u}{\p t},\varphi\>dxdt, \quad\quad\text{as}\,\,\ep \to \text{0}.$$
For the right hand side, we take integrating by parts to obtain
 \begin{align*}
RHS=&\ep\int_{0}^{T}\int_{\Om}\<\n u_{\ep}, \n \varphi\>dxdt+\al\int_{0}^{T}\int_{\Om}\<u_{\ep}\times\De u_{\ep},u_{\ep}\times\varphi\>dxdt-\int_{0}^{T}\int_{\Om}\<u_{\ep}\times\De u_{\ep},\varphi\>dxdt\\
&+\al\int_{0}^{T}\int_{\Om}\<u_{\ep}\times(\n_u\Phi(u_\ep)+h_d(u_\ep)+\beta J\cdot\n u_\ep),u_{\ep}\times\varphi\>dxdt\\
&-\int_{0}^{T}\int_{\Om}\<u_{\ep}\times(\n_u\Phi(u_\ep)+h_d(u_\ep)+\beta J\cdot\n u_\ep),\varphi\>dxdt\\
=&\ep\int_{0}^{T}\int_{\Om}\<\n u_{\ep}, \n \varphi\>dxdt+I+II+III+IV.
\end{align*}
Since $\n u_{\ep}\rightharpoonup\n u\,\text{weakly in}\, L^{2}([0,T],L^{2}(\Om,\Real^{3}))$, we have
$$\ep\int_{0}^{T}\int_{\Om}\<\n u_{\ep}, \n \varphi\>dxdt\to 0,$$
as $\ep \to 0$.

For the third term $III$ and $IV$, by the arguments in $Step\, 1$ we can get directly
\begin{align*}
&III=\al\int_{0}^{T}\int_{\Om}\<u_{\ep}\times(\n_u\Phi(u_\ep)+h_d(u_\ep)+\beta J\cdot\n u_\ep),u_{\ep}\times\varphi\>dxdt\\
&\to \al\int_{0}^{T}\int_{\Om}\<u\times(\n_u\Phi(u)+h_d(u)+\beta J\cdot\n u),u\times\varphi\>dxdt,
\end{align*}
\begin{align*}
&IV=-\int_{0}^{T}\int_{\Om}\<u_{\ep}\times(\n_u\Phi(u_\ep)+h_d(u_\ep)+\beta J\cdot\n u_\ep),\varphi\>dxdt\\
&\to-\int_{0}^{T}\int_{\Om}\<u\times(\n_u\Phi(u)+h_d(u)+\beta J\cdot\n u),\varphi\>dxdt.
\end{align*}

Next, we show the convergence of term $I$, so as $II$ by almost the same argument as in the above.
\begin{align*}
&I=\al\int_{0}^{T}\int_{\Om}\<u_{\ep}\times \De u_{\ep},u_\ep\times\varphi\>dxdt\\
&=\al\int_{0}^{T}\int_{\Om}\<u_{\ep}\times \De u_{\ep},u\times\varphi\>dxdt+\al\int_{0}^{T}\int_{\Om}\<u_{\ep}\times \De u_{\ep},(u_\ep-u)\times\varphi\>dxdt\\
&=\al(I'+I'').
\end{align*}
For $I''$, we have
\begin{align*}
|I''|\leq |\varphi|_{L^{\infty}}(\sup_{\ep>0}\int_{0}^{T}\int_{\Om}|u_{\ep}\times\De u_{\ep}|^{2}dxdt)^{1/2}(\int_{0}^{T}\int_{\Om}| u_{\ep}-u|^{2}dxdt)^{1/2}\to 0
\end{align*}
as $\ep \to 0$.
It remains to deal with the first term $I'$. Using integration by parts, we have
\begin{align*}
I'&=-\int_{0}^{T}\int_{\Om}\<u_{\ep}\times \n u_{\ep},\n(u\times\varphi)\>dxdt\\
&\to -\int_{0}^{T}\int_{\Om}\<u\times \n u,\n(u\times\varphi)\>dxdt,\\
\end{align*}
since
$$u_{\ep}\times \n u_{\ep}\rightharpoonup u\times \n u,\text{weak in}\,L^{2}(\Om\times[0,T],\Real^{3}).$$
By a similar argument to the above, we also have
$$II\to \int_{0}^{T}\int_{\Om}\<u\times \n u,\n\varphi\>dxdt.$$

To summarize the above arguments, we conclude that the limiting map $u$ satisfies the following equation
\begin{equation}\label{wcur}
\begin{aligned}
&\int_{0}^{T}\int_{\Om}\<\frac{\p u}{\p t},\varphi\>dxdt=-\al\int_{0}^{T}\int_{\Om}\<u\times \n u,\n(u\times\varphi)\>dxdt+\int_{0}^{T}\int_{\Om}\<u\times \n u,\n \varphi\>dxdt\\
&+\al\int_{0}^{T}\int_{\Om}\<u\times \tilde{h}_\beta(u) ,u\times\varphi\>dxdt-\int_{0}^{T}\int_{\Om}\<u\times \tilde{h}_\beta(u) ,\varphi\>dxdt,
\end{aligned}
\end{equation}
where $\tilde{h}_\beta(u)=-\n_u\Phi(u)+h_d(u)+\beta J\cdot\n u$ and $\varphi\in C^{\infty}(\bar{\Om}\times [0,T],\Real^{3})$.

Let $u\times \varphi$ replace $\varphi$ in the above equation, we have
 \begin{equation}\label{wcur1}
 \begin{aligned}
 &\int_{0}^{T}\int_{\Om}\<u\times\frac{\p u}{\p t},\varphi\>dxdt=-\al\int_{0}^{T}\int_{\Om}\<u\times \n u,\n \varphi\>dxdt-\int_{0}^{T}\int_{\Om}\<u\times \n u,u\times \varphi\>dxdt\\
 &+\al\int_{0}^{T}\int_{\Om}\<u\times \tilde{h}_\beta(u) ,\varphi\>dxdt+\int_{0}^{T}\int_{\Om}\<u\times \tilde{h}_\beta(u) ,u\times \varphi\>dxdt.
 \end{aligned}
 \end{equation}

To add \eqref{wcur} and $-\al\times$\eqref{wcur1}, we have
\begin{equation}\label{wcur3}
\begin{aligned}
&\int_{0}^{T}\int_{\Om}\<\frac{\p u}{\p t},\varphi\>dxdt-\al\int_{0}^{T}\int_{\Om}\<u\times\frac{\p u}{\p t},\varphi\>dxdt\\
=&(1+\al^2)\int_{0}^{T}\int_{\Om}\left(\<u\times \n u,\n \varphi\>-\<u\times\tilde{h}_\beta(u) ,\varphi\>\right)dxdt.\\
\end{aligned}
\end{equation}
Thus, the proof finished.
\end{proof}

\section{The heat flows associate to micromagnetic energy functional}\label{s: hf-m}
In the present section, we intend to show the well-posedness of global solutions to equations \eqref{hf-m} and \eqref{hf-m1} respectively. We will only give the sketches of the proofs for Theorems \ref{thm2} and \ref{thm3}, since the arguments go almost the same as that of Theorem \ref{thm1}. Let $\Om$ be a smoothly bounded  domain in $\Real^{m}$, where $m\geq 3$. Recall that the equation for us to consider is as following
\begin{equation}\label{hf-m2}
\left\{
\begin{aligned}
& u_t=-u\times(u\times \bar{h}_\beta), \quad (x, t)\in \Om\times (0,T),\\
& u(0)=u_0: \Om\to \U^{2},\quad \frac{\partial u}{\partial\nu}=0\quad \mbox{on}\,\, \p\Om\times[0,T].
\end{aligned}\right.
\end{equation}
As before, we also consider the following approximate equation
\begin{equation}\label{app-hf}
\left\{
\begin{array}{llll}
\aligned
&u_t=\ep\De u-\J(u)\times(\J(u)\times \bar{h}_\beta(u)), \quad \mbox{in}\,\, \Omega\times(0,T),\\
&u(0,\cdot)=u_0:\,\Omega\rightarrow \U^2,\quad\frac{\partial u}{\partial\nu}=0\quad \mbox{on}\,\, \partial\Omega,
\endaligned
\end{array}
\right.
\end{equation}
where $\mathfrak{J}(u)$ and the extension $\tilde{\Phi}$ of $\Phi$ are defined by the same way as in Section $3$, and
$$\bar{h}_\beta(u)=\De u-\n_u\Phi(\J(u))+\beta J\cdot\n u.$$
For simplicity, we still denote $\tilde{\Phi}$ by $\Phi$.

By Galerkin method, it is not difficult to prove that above equation admits a solution $u_\ep$ in $L^{\infty}([0,T],H^{1}(\Om,\Real^{3}))\cap W^{2,1}_{2}(\Om\times[0,T],\Real^{3})$ for any $T>0$, which satisfies the following uniform estimates with respect to $\ep$
 \begin{align*}
&(1+\ep)\sup_{[0,T]}\int_{\Om}|\n u_{\ep}|^{2}dx+\int_{0}^{T}\int_{\Om}|\J(u_{\ep})\times \De u_{\ep}|^{2}dx+\int_{0}^{T}\int_{\Om}|\frac{\p  u_{\ep}}{\p t}|^{2}dx+\ep\int_{0}^{T}\int_{\Om}|\n^{2} u_{\ep}|^{2}dx\\
\leq&C(\al)T+(C(\al)I(T)+1)\left(\int_{\Om}|\n u_{0}|^{2}dx+C(\al)T\right)\exp{I(T)}.
\end{align*}
Its proof is similar to that in the previous section. Using Lemma \ref{mpl}, we also have $|u_\ep|\leq 1$, then $\J(u_\ep)=u_\ep$.  Now, we are in the position to prove Theorem \ref{thm2}.

\begin{proof}[\textbf{The proof of Theorem \ref{thm2}}]	\
	
By almost the same arguments as in the proof of Theorem \ref{thm1}, we can get a map $$u\in L^{\infty}([0,T],H^{1}(\Om,\U^{2}))\cap W^{1,1}_{2}(\Om\times[0,T],\U^{2}),$$ which is a limiting map of $\{u_\ep\}$ by let $\ep\to 0$. Moreover, It satisfies the following
\begin{equation}\label{whf-c}
\begin{aligned}
&\int_{0}^{T}\int_{\Om}\<\frac{\p u}{\p t},\varphi\>dxdt=-\int_{0}^{T}\int_{\Om}\<u\times \n u,\n(u\times\varphi)\>dxdt\\
&+\int_{0}^{T}\int_{\Om}\<u\times(-\n_u\Phi(u)+\beta J\cdot\n u),u\times\varphi\>dxdt
\end{aligned}
\end{equation}
where $\varphi\in C^{\infty}(\bar{\Om}\times [0,T],\Real^{3})$.

For the first term in the right hand side of above equation, there holds
\begin{align*}
&-\int_{0}^{T}\int_{\Om}\<u\times\n u,\n (u\times\varphi\>dxdt\\
=&-\int_{0}^{T}\int_{\Om}\<u\times\n u,\n u\times(\varphi-\varphi\cdot u u)\>dxdt-\int_{0}^{T}\int_{\Om}\<u\times\n u,u\times\n(\varphi-\varphi\cdot u u)\>dxdt\\
=&-\int_{0}^{T}\int_{\Om}\<u\times\n u,u\times\n\varphi\>dxdt+\int_{0}^{T}\int_{\Om}\<u\times\n u,u\times(\varphi\cdot u) \n u)\>dxdt\\
=&-\int_{0}^{T}\int_{\Om}\<\n u,\n \varphi\>dxdt+\int_{0}^{T}\int_{\Om}|\n u|^{2}\<\varphi, u\>dxdt,
\end{align*}
where we have used $\n u\times(\varphi-\varphi\cdot u u)=0$ or $c(x)u$ for some $1$-form $c(x)$.
Thus, $u$ is a weak solution of equation \eqref{hf-m}.
\end{proof}
By almost the same process showed in the above, we can also get the existence of the weak solution to heat flow \eqref{hf-m1}. Thus, the proof of Theorem \ref{thm3} is completed.

\medskip
\noindent {\it\textbf{Acknowledgements}}: The authors are supported by NSFC grant (No.11731001).

\end{document}